\newcommand{\FF}{\mathbb F}
\newcommand{\cM}{\mathcal M}
\newcommand{\cP}{\mathcal P}
\newcommand{\cV}{\mathcal V}
\newcommand{\cS}{\mathcal S}
\newcommand{\cL}{\mathcal L}
\newcommand{\cG}{\mathcal G}
\newcommand{\cC}{\mathcal C}
\newcommand{\PG}{\mathrm{PG}}
\newcommand{\GL}{\mathrm{GL}}
\newcommand{\fB}{\mathfrak B}
\newcommand{\cI}{\mathcal{I}}
\newtheorem{theorem}{Theorem}[section]
\newtheorem{lemma}[theorem]{Lemma}
\newtheorem{corollary}[theorem]{Corollary}
\newtheorem{conjecture}[theorem]{Conjecture}
\theoremstyle{definition}
\newtheorem{remark}[theorem]{Remark}
\newtheorem{definition}[theorem]{Definition}
\let\ltxxlabel\ltx@label
\title{Grassmannians of codes}
\author{I. Cardinali, L. Giuzzi}
\begin{document}

\maketitle

\begin{abstract}
	\noindent Consider the point line-geometry $\cP_t(n,k)$ having as points all the $[n,k]$-linear codes having minimum dual distance at least $t+1$ and where two points $X$ and $Y$ are collinear whenever $X\cap Y$ is a $[n,k-1]$-linear code having minimum dual distance at least $t+1$.
	We are interested in the collinearity graph $\Lambda_t(n,k)$ of $\cP_t(n,k).$ The graph $\Lambda_t(n,k)$ is a subgraph of the Grassmann graph and also a subgraph of the graph $\Delta_t(n,k)$ of the linear codes having minimum dual distance at least $t+1$ introduced in~\cite{KP16}.
	We shall study the structure of $\Lambda_t(n,k)$ in relation to
        that of $\Delta_t(n,k)$ and we will characterize the set of its isolated vertices. We will then focus on $\Lambda_1(n,k)$  and $\Lambda_2(n,k)$ providing necessary and sufficient conditions for them
	to be connected.
\end{abstract}

\leftline{{\bfseries MSC:} 51E22, 94B27 }
\leftline{{\bfseries Keywords:} Grassmann graph, Linear codes, Point-Line
  geometry, Connectivity}

\section{Introduction}
Let $V:=V(n,q)$ be an $n$-dimensional  vector space
over the finite field $\FF_q$ with $q$ elements
and $0<k<n$; let $\Gamma(n,k)$ be the graph
whose vertices are the $k$-subspaces of $V$ and where two
vertices $X,Y$ are adjacent if and only if
$\dim(X\cap Y)=k-1$. The graph $\Gamma(n,k)$ is called the $k$-Grassmann graph of $V.$

An $[n,k]$-linear code is just a vertex of $\Gamma(n,k)$. We say that
a $[n,k]$-linear code $C$ has dual minimum distance at least $t+1$ if, given a
generator matrix $G$ for $C$, any set of $t$ columns of $G$ is linearly
independent. It is easy to see that this condition does not depend on the
particular generator matrix chosen for $C$ and that
this is the same as to require that the dual code of $C$
has minimum Hamming distance at least $t+1$, whence the name.

Denote by $\cC_t(n,k)$ the set of all $[n,k]$-codes with dual minimum
distance at least $t+1$ over $\FF_q$.
By construction,
the elements of $\cC_t(n,k)$ can be regarded as the vertices of the induced
subgraph $\Delta_t(n,k)$ of $\Gamma(n,k)$,
where two codes  $X$, $Y$, considered as two $k$-dimensional subspaces, are adjacent ($X\sim_{\Delta} Y$) if and only
if $\dim(X\cap Y)=k-1$.

The graph $\Delta_t(n,k)$ has been called the {\it{Grassmann graph of the linear codes with dual minimum distance at least $t+1$}}.
Clearly, $0\leq t\leq k.$
For $t=0$, $\cC_0(n,k)$ consists of the class of all $k$-subspaces
of $V$ and $\Delta_0(n,k)=\Gamma(n,k)$; 
for $t=1$, $\cC_1(n,k)$ is called the class of the \emph{non-degenerate $[n,k]$-linear codes} and for $t=2$, $\cC_2(n,k)$ is called the class of the \emph{projective $[n,k]$-linear codes}.
The case $t=k$
corresponds to codes whose dual is MDS (\emph{maximum distance separable}).
In particular as the duals of MDS codes are in turn MDS (see \cite{MS}), the
elements of $\cC_t(n,k)$ are themselves MDS.

The graphs $\Delta_t(n,k)$ for $t=1$ (non-degenerate codes) and for $t=2$
(projective codes) have been studied respectively in \cite{KP16} and
\cite{KPP18}. The graph $\Delta_t(n,k)$ for $t\geq3$ has been introduced and
studied in~\cite{ILK21}.


The following theorem synthetically reports on the main results
of interest here regarding the graph $\Delta_t(n,k).$
\begin{theorem}[\cite{ILK21,KP16,KPP18}]
	\label{told}
	\begin{enumerate}
		\item $\Delta_1(n,k)$ is connected for any $q$; furthermore $\Delta_1(n,k)$  is isometrically embedded in the $k$-Grassmann graph $\Gamma(n,k)$ if and only if  $n<(q+1)^2+k-2$.
		\item If $q\geq{n\choose 2}$ then $\Delta_2(n,k)$ is connected and it is
		      isometrically embedded in the $k$-Grassmann graph $\Gamma(n,k)$; furthermore, 
		      $\Delta_2(n,k)$ and $\Gamma(n,k)$ have the same diameter.
		\item If $t>2$ and
		      $q\geq{n\choose t}$ then $\Delta_t(n,k)$ is connected and it is isometrically embedded in the $k$-Grassmann graph
		      $\Gamma(n,k)$; furthermore, $\Delta_t(n,k)$ and $\Gamma (n,k)$ have
		      the same diameter.
	\end{enumerate}
\end{theorem}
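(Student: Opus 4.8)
The three parts share a common geometric core, so I would first isolate it. Since $\Delta_t(n,k)$ is an induced subgraph of $\Gamma(n,k)$ we always have $d_{\Delta_t}(X,Y)\ge d_\Gamma(X,Y)$ for $X,Y\in\cC_t(n,k)$, and in the Grassmann graph $d_\Gamma(X,Y)=k-\dim(X\cap Y)$. Hence all three assertions (connectivity, isometric embedding, equality of diameters) follow at once from the single statement: \emph{for every pair $X,Y\in\cC_t(n,k)$ there is a geodesic of $\Gamma(n,k)$ joining them all of whose vertices lie in $\cC_t(n,k)$}. Such a geodesic forces $d_{\Delta_t}(X,Y)=d_\Gamma(X,Y)$ (isometry, hence connectivity), and as $\diam\Gamma(n,k)=\min(k,n-k)$, equality of diameters then reduces to exhibiting one pair of codes in $\cC_t(n,k)$ at $\Gamma$-distance $\min(k,n-k)$, which a direct count produces as soon as $q$ is large.

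The plan therefore reduces to producing geodesics inside $\cC_t(n,k)$. I would normalise by setting $U:=X\cap Y$, $d:=k-\dim U=d_\Gamma(X,Y)$, and passing to the interval $[U,X+Y]$; factoring out $U$ identifies the $k$-subspaces of this interval with the $d$-subspaces of the $2d$-dimensional space $W:=(X+Y)/U$, in which $\bar X,\bar Y$ are complementary. Geodesics of $\Gamma(n,k)$ from $X$ to $Y$ correspond to geodesics of $\Gamma(2d,d)$ from $\bar X$ to $\bar Y$, and these can be built from a decreasing complete flag in $\bar X$ together with an increasing complete flag in $\bar Y$: setting $\bar Z_i:=\bar X_{d-i}\oplus\bar Y_i$ gives $\bar X=\bar Z_0,\dots,\bar Z_d=\bar Y$ with consecutive terms adjacent and $\dim(\bar Z_i\cap\bar Y)=i$, i.e. a geodesic. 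This furnishes a large supply of candidate geodesics parametrised by the two flags.

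I would then impose the dual-distance condition via the reformulation that $Z\in\cC_t(n,k)$ iff the coordinate projection $\pi_S|_Z$ is surjective for every $t$-subset $S\subseteq\{1,\dots,n\}$ (equivalently, any $t$ columns of a generator matrix are independent). Each intermediate vertex $Z_i$ of the lifted geodesic must then avoid the bad locus where some $\pi_S|_{Z_i}$ drops rank; for a fixed elementary move this locus is a union of $\binom{n}{t}$ proper determinantal subvarieties of the space of admissible directions, so it misses an $\FF_q$-point provided $q$ is large relative to $\binom{n}{t}$. Carrying this out one move at a time—where the freedom is a line's worth of directions, i.e. $q+1$ choices, against at most $\binom{n}{t}$ forbidden ones—yields the bound $q\ge\binom{n}{t}$ of parts (2) and (3), and the induction assembles a full geodesic in $\cC_t(n,k)$. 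I expect the main obstacle to be exactly here: ensuring a single choice of intermediate subspace simultaneously keeps the path a geodesic toward $Y$ \emph{and} stays in $\cC_t$, uniformly over all $\binom{n}{t}$ coordinate conditions, so that the count is genuinely against $\binom{n}{t}$ obstructions rather than their products.

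Finally, part (1) ($t=1$) is special and I would treat it separately. Non-degeneracy only forbids zero columns, so $\cC_1(n,k)$ consists of the codes lying in no coordinate hyperplane; this is mild enough that the inductive construction above completes regardless of $q$, giving connectivity for every $q$ (here I would induct on $d$, checking that among the neighbours of $X$ at distance $d-1$ from $Y$ at least one avoids all $n$ coordinate hyperplanes). The isometric embedding is not automatic: keeping every intermediate code non-degenerate along some geodesic fails precisely when $n$ is too large relative to $q$. To pin down the threshold $n<(q+1)^2+k-2$ I would analyse a diametral pair and bound how many coordinates an intermediate code on a geodesic can be forced to vanish on; the quantity $(q+1)^2$ should enter as the maximal number of points of $\PG(k-1,q)$ absorbed by an extremal configuration, and I would complement the positive range with an explicit example attaining $n=(q+1)^2+k-2$ for which every geodesic leaves $\cC_1$. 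Establishing the sharpness of this constant is, I expect, the most technical point of part (1).
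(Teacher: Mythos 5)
This statement is not proved in the paper at all: Theorem~\ref{told} is imported verbatim from \cite{KP16}, \cite{KPP18} and \cite{ILK21}, so there is no internal proof to compare against. Judged on its own, your outline for parts (2) and (3) is in the right spirit of those references (build a Grassmann geodesic from $X$ to $Y$ inside the interval $[X\cap Y,\,X+Y]$ and, at each elementary move, count the at most ${n\choose t}$ coordinate obstructions against the $q+1$ available directions), but it has one genuine logical flaw and one unaddressed technical point.

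The flaw is in part (1). Your proposed connectivity argument for $\Delta_1(n,k)$ --- ``induct on $d$, checking that among the neighbours of $X$ at distance $d-1$ from $Y$ at least one avoids all $n$ coordinate hyperplanes'' --- is precisely the statement that every pair $X,Y\in\cC_1(n,k)$ is joined by a $\Gamma(n,k)$-geodesic lying entirely in $\cC_1(n,k)$, i.e.\ that $\Delta_1(n,k)$ is isometrically embedded in $\Gamma(n,k)$. But the same part (1) asserts that this fails whenever $n\geq(q+1)^2+k-2$, a range in which connectivity must still hold for every $q$. So for such $n$ the inductive step necessarily breaks at some pair, and connectivity has to be established by paths that are \emph{longer} than Grassmann geodesics (this detour argument is the actual content of \cite{KP16}); your sketch contains no mechanism for producing them. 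Secondly, in the counting step for parts (2) and (3) you tacitly assume each $t$-set $S$ of coordinates forbids at most one of the $q+1$ candidate directions. That is only true if the columns indexed by $S$ of the fixed $(k-1)$-dimensional intersection $Z_i\cap Z_{i+1}$ already have rank $t-1$; if their rank is $\leq t-2$, then \emph{every} choice of the new vector leaves those $t$ columns dependent and the move is impossible regardless of $q$. One must therefore control the degeneracy of the intermediate $(k-1)$-spaces along the whole path, not just choose the last vector well --- this is exactly where the proofs in \cite{KPP18} and \cite{ILK21} do their real work, and it is missing from the proposal.
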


In this paper, we shall consider a subgraph of $\Delta_t(n,k)$, which we will denote by $\Lambda_t(n,k)$, defined as follows: the vertices of  $\Lambda_t(n,k)$ are all the elements of $\cC_t(n,k)$ and two vertices $X$ and $Y$ are adjacent in  $\Lambda_t(n,k)$ ($X\sim_{\Lambda} Y$) whenever their intersection $X\cap Y$ belongs to $\cC_t(n,k-1)$.
In other words, the vertices of $\Lambda_t(n,k)$
are the same as the vertices of $\Delta_t(n,k)$, but the
condition for an edge to exist is stronger.

Consider now the point-line geometry $\cP_t(n,k):=(\cC_t(n,k), \cL_t(n,k))$ where the points
are the elements of $\cC_t(n,k)$
and the lines are defined as:
\begin{itemize}
	\item $\ell_{X,Y}:=\{Z\in \cC_t(n,k): X\subset Z\subset Y\}$ with
	      $X\in\cC_t(n,k-1)$, $Y\in\cC_t(n,k+1)$ if $k<n-1$;
	\item $\ell_X:=\{Z\in \cC_t(n,n-1): X\subset Z\}$ with
	      $X\in\cC_t(n,n-2)$ if $k=n-1$.
\end{itemize}

Observe that if $Z$ is a $k$- dimensional vector subspace and it contains a subspace $D\in\cC_t(n,k')$
with $k'< k$, then $Z\in\cC_t(n,k)$. In particular, the line
$\ell_{X,Y}\in \cL_t(n,k)$ can also be described as
$\ell_{X,Y}:=\{Z: X\subset Z\subset Y\}$ with $X\in\cC_t(n,k-1)$
and $\dim(Y)=k+1$.

The geometry $\cP_t(n,k)$ is a subgeometry of the \emph{$k$-Grassmann geometry} $\cG(n,k)$ (see Section~\ref{Sec2}) and the collinearity graph of $\cP_t(n,k)$ is precisely the graph $\Lambda_t(n,k)$.

In this paper,
we shall study the structure of $\Lambda_t(n,k)$ and the interplay between the geometry $\cP_t(n,k)$ and the geometry $\cG(n,k).$
We point out that from a more applied point of view, the study of the graph $\Lambda_t(n,k)$ is related to some code density problem, but we leave
its investigation to further works.

Before stating our main results we need to give the following definitions.

\begin{definition}\label{isolated code}
	A code $C\in\cC_t({n,k})$ is \emph{isolated} if $C$ does
	not contain any proper subcode $D\in\cC_t({n,k-1})$.
	We denote the set of all isolated codes in $\cC_t(n,k)$ by
	the symbol $\cI_t(n,k)$.
\end{definition}
If a code $C$ is not isolated, then
there exists at least one code
$C'\in\cC_t(n,k)$ with $C'\neq C$
and such that $C\cap C'\in\cC_t(n,k-1)$, that is $C'\sim_\Lambda C$.
So, it follows readily that
a vertex of $\Lambda_t(n,k)$ is {\it isolated} if and only if it corresponds to an isolated code of $\cC_t({n,k}).$
On the other hand, the graph induced by $\Lambda_t(n,k)$ on $\cI_t(n,k)$ is \emph{totally disconnected}, i.e. it contains no edge. We will use the same symbol $\cI_t(n,k)$ to denote both the set of isolated codes of $\cP_t(n,k)$ and the subgraph $(\cI_t(n,k),\emptyset)$
of isolated vertices of $\Lambda_t(n,k).$

Given a vector space of dimension $k$ over $\FF_q$, we denote by $\PG(k-1,q)$ the associated projective space; if $X$ is a set of points of $\PG(k-1,q)$, then $\langle X\rangle$ denotes the projective subspace spanned by $X$.
The graph $\Lambda_t(n,k)$ is related to interesting and well studied configurations of points of a projective space such as the {\it $t$-saturating sets} whose definition we report below.

\begin{definition}\label{saturated}
	Let $t$ be an integer $0\leq t\leq k$.
	For any $\Omega\subseteq\PG(k-1,q)$
	let
	\[ \cS_t(\Omega):=\bigcup_{\begin{subarray}{c}
				X\subseteq\Omega \\
				|X|=t+1
			\end{subarray}}\langle X\rangle \]
	be the set of all points of $\PG(k-1,q)$ on subspaces
	spanned by $t+1$ points of $\Omega$.
	The set $\Omega$ is \emph{$t$-saturating} if $\cS_t(\Omega)=\PG(k-1,q)$ and
	$\cS_{t-1}(\Omega)\neq\PG(k-1,q)$.
\end{definition}
If $t=0$, then the only \emph{$0$-saturating} set of $\PG(k-1,q)$ is the point set of $\PG(k-1,q).$
If $t=1$, then a \emph{$1$-saturating} of $\PG(k-1,q)$ is a set $\Omega$
whose $2$-secants cover $\PG(k-1,q)$, i.e. for any $P\in\PG(k-1,q)$
there are $X,Y\in\Omega$ such that $P\in\langle X,Y\rangle$.

We refer to~\cite{Dav03,Dav19,G13} and the references therein for more information on saturating sets and for the known bounds on the minimal cardinality of a $t$-saturating set and related constructions.

\subsection{Main Results}
The following are the main results of this paper.
\begin{theorem} \label{main thm 1}
	The graph
	$\Delta_t(n,k-1)$ is connected if and only if
	the subgraph $\widetilde{\Lambda}_t(n,k)$ of
	$\Lambda_t(n,k)$ induced by all non-isolated codes
	in $\cC_t(n,k)\setminus\cI_t(n,k)$ is connected.
	In particular, $\Lambda_t(n,k)=\widetilde{\Lambda}_t(n,k)\cup\cI_t(n,k)$.
\end{theorem}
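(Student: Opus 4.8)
The decomposition $\Lambda_t(n,k)=\widetilde{\Lambda}_t(n,k)\cup\cI_t(n,k)$ is immediate from the definitions, and is essentially already observed above: the vertex set $\cC_t(n,k)$ splits into $\cI_t(n,k)$ and its complement, an isolated code is incident to no edge of $\Lambda_t(n,k)$, and hence $\Lambda_t(n,k)$ is the disjoint union of the induced subgraph $\widetilde{\Lambda}_t(n,k)$ on the non-isolated codes and the edgeless graph on $\cI_t(n,k)$. Thus the real content of the theorem is the equivalence between connectivity of $\Delta_t(n,k-1)$ and of $\widetilde{\Lambda}_t(n,k)$, and the plan is to pass back and forth through the correspondence ``$(k-1)$-dimensional subcode $\leftrightarrow$ $\Lambda$-edge''. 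We may assume $\cC_t(n,k-1)\neq\emptyset$, since otherwise every code of $\cC_t(n,k)$ is isolated and both graphs are empty.

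First I would record three elementary facts, all pure dimension counts combined with the observation from the excerpt that any $k$-space containing a member of $\cC_t(n,k-1)$ already lies in $\cC_t(n,k)$: (i) for $D\in\cC_t(n,k-1)$ the set $S(D)$ of all $k$-codes containing $D$ is contained in $\cC_t(n,k)$, consists of non-isolated codes, and is a clique of $\Lambda_t(n,k)$, since two distinct $k$-spaces through the hyperplane $D$ meet exactly in $D$; (ii) if $D\sim_{\Delta}D'$ in $\Delta_t(n,k-1)$ then $X_0:=D+D'$ satisfies $\dim(D+D')=k$, so $X_0$ is a $k$-code lying in $S(D)\cap S(D')$; (iii) if $D,D'$ are distinct members of $\cC_t(n,k-1)$ both contained in a single $k$-code $X$, then, being two hyperplanes of $X$, they satisfy $\dim(D\cap D')=k-2$, whence $D\sim_{\Delta}D'$.

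For the forward direction I would assume $\Delta_t(n,k-1)$ connected, note that every non-isolated code of $\cC_t(n,k)$ lies in some $S(D)$, and translate a $\Delta$-path $D_0\sim_{\Delta}\cdots\sim_{\Delta}D_m$ into the chain of cliques $S(D_0),\dots,S(D_m)$ whose consecutive members overlap by (ii); hence all the $S(D)$, and so all non-isolated codes, lie in one component of $\widetilde{\Lambda}_t(n,k)$. For the converse I would assume $\widetilde{\Lambda}_t(n,k)$ connected, pick for given $D,D'\in\cC_t(n,k-1)$ any $k$-codes $X\supseteq D$ and $Y\supseteq D'$ (non-isolated by construction), take a $\Lambda$-path $X=X_0\sim_{\Lambda}\cdots\sim_{\Lambda}X_m=Y$, and read off the intersections $D_i:=X_i\cap X_{i+1}\in\cC_t(n,k-1)$; since $D_{i-1}$ and $D_i$ are both hyperplanes of $X_i$, fact (iii) makes $D_0,\dots,D_{m-1}$ a walk in $\Delta_t(n,k-1)$, and the same fact applied inside $X_0$ and $X_m$ attaches $D$ and $D'$ to its two ends.

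The argument is geometrically straightforward; the only genuinely delicate point is bookkeeping rather than content. One must verify at each step that the codes produced actually lie in the relevant $\cC_t$ — guaranteed by the extension observation for the $k$-codes and built into the edge condition for the $(k-1)$-codes — and one must allow consecutive subcodes in the constructed walks to coincide, absorbing such repetitions so that a genuine path is obtained. This is where I expect the main (if modest) obstacle to lie: ensuring that the two translations are inverse to each other on the level of connected components, i.e. that no spurious passage outside $\cC_t(n,k)$ or $\cC_t(n,k-1)$ is ever required.
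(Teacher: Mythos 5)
Your proposal is correct and follows essentially the same route as the paper: your facts (i)--(iii) and the two translations are exactly the content of the paper's Lemma~\ref{cc} (where the overlap code $D_i+D_{i+1}$ plays the role of your clique intersection $S(D_i)\cap S(D_{i+1})$) and Lemma~\ref{l:nc} (stated there in contrapositive form), combined via Corollaries~\ref{connected} and~\ref{co:con}. The clique-chain packaging of the forward direction is only a cosmetic variant, and your handling of the degenerate case $\cC_t(n,k-1)=\emptyset$ matches the remark following the theorem.
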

\begin{remark}
	If $\cC_t(n,k-1)=\emptyset$, then $\Delta_t(n,k-1)$ is an empty
	graph and $\Lambda_t(n,k)=\cI_t(n,k)$ consists only of isolated
	vertices.
	If we take by convention the empty graph to be connected, then
	Theorem~\ref{main thm 1} is true also in this case.
	Also the converse holds, i.e. if
	$\Lambda_t(n,k)=\cI_t(n,k)$, then $\cC_t(n,k-1)=\emptyset$. By Theorem~\ref{large-q}, this holds for $t=k.$
\end{remark}

\begin{theorem}\label{main thm 2}
	A code $C\in\cC_t(n,k)$ is isolated if and only if the columns of any
	generator matrix of $C$ are vector representatives of a $(t-1)$-saturating set
	of $\PG(k-1,q)$.
\end{theorem}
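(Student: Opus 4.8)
The plan is to translate the containment of subcodes into incidence conditions between a point and the point set $\Omega\subseteq\PG(k-1,q)$ spanned by the columns of a generator matrix. Fix a generator matrix $G$ of $C$ and write $g_1,\dots,g_n\in\FF_q^k$ for its columns, so that $\Omega=\{\langle g_1\rangle,\dots,\langle g_n\rangle\}$ and the hypothesis $C\in\cC_t(n,k)$ says precisely that any $t$ of the $g_i$ are linearly independent. First I would set up a bijection between the proper $(k-1)$-dimensional subcodes $D$ of $C$ and the points $P=\langle v\rangle$ of $\PG(k-1,q)$: a subcode $D$ of codimension $1$ is the row space of $AG$ for a $(k-1)\times k$ matrix $A$ of rank $k-1$, and the subcode depends only on $\ker(x\mapsto Ax)=\langle v\rangle$, a point $P$ of $\PG(k-1,q)$; conversely every $P$ yields such a subcode $D_P$. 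The columns of the induced generator matrix $AG$ are the projections $Ag_1,\dots,Ag_n$ of $\Omega$ from $P$.

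The computational heart is the following claim, which I would prove by a direct linear-algebra argument: for a $t$-subset of columns, the images $Ag_{i_1},\dots,Ag_{i_t}$ are linearly dependent if and only if $v\in\langle g_{i_1},\dots,g_{i_t}\rangle$. Indeed a dependence $\sum_j\lambda_j Ag_{i_j}=0$ means $\sum_j\lambda_j g_{i_j}\in\ker A=\langle v\rangle$; since $C\in\cC_t(n,k)$ forces $g_{i_1},\dots,g_{i_t}$ to be independent, this combination cannot be $0$, hence equals $\mu v$ with $\mu\neq 0$, placing $v$ in their span, and the converse is immediate. Recasting this projectively, $D_P$ has a dependent $t$-tuple of columns exactly when $P\in\langle X\rangle$ for some $t$-subset $X\subseteq\Omega$, i.e. when $P\in\cS_{t-1}(\Omega)$. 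Therefore $D_P\in\cC_t(n,k-1)$ if and only if $P\notin\cS_{t-1}(\Omega)$.

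Combining the bijection with this equivalence finishes the covering part: $C$ is isolated precisely when no $D_P$ lies in $\cC_t(n,k-1)$, i.e. when every point $P$ lies in $\cS_{t-1}(\Omega)$, which is the condition $\cS_{t-1}(\Omega)=\PG(k-1,q)$. This already yields the implication ``$\Omega$ is $(t-1)$-saturating $\Rightarrow$ $C$ isolated'', since that direction uses only the covering condition $\cS_{t-1}(\Omega)=\PG(k-1,q)$.

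The main obstacle is the remaining minimality clause in Definition~\ref{saturated}, namely that for an isolated $C$ one must also check $\cS_{t-2}(\Omega)\neq\PG(k-1,q)$. Running the same dictionary one level down shows $P\notin\cS_{t-2}(\Omega)$ if and only if $D_P\in\cC_{t-1}(n,k-1)$, so what has to be established is that $C$ always possesses a $(k-1)$-dimensional subcode of dual distance at least $t$, equivalently a point lying on no span of $t-1$ columns. I would deduce this from $d^\perp(C)\geq t+1$, i.e. from the absence of dependent $t$-tuples among the $g_i$, which is exactly where the $t$-wise independence must be exploited; this is the delicate step, and for $t=1$ it is automatic since $\cS_{-1}(\Omega)=\emptyset\neq\PG(k-1,q)$.
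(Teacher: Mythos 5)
Your dictionary between the codimension-one subcodes of $C$ and the points of $\PG(k-1,q)$, together with the equivalence ``$Ag_{i_1},\dots,Ag_{i_t}$ are dependent if and only if $v\in\langle g_{i_1},\dots,g_{i_t}\rangle$'' (which indeed uses the $t$-wise independence of the columns to rule out the combination being zero), reproduces exactly the paper's argument via Lemma~\ref{subcodes} and the kernels $\ker(\varphi)$. Your conclusion that $C$ is isolated if and only if $\cS_{t-1}(\Omega)=\PG(k-1,q)$ is the entire content of the paper's proof, so up to that point your proposal is correct and follows the same route.

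The step you flag as ``delicate'' is, however, not just delicate: it is false in general, so no argument will close it. The minimality clause $\cS_{t-2}(\Omega)\neq\PG(k-1,q)$ of Definition~\ref{saturated} does not follow from $C\in\cC_t(n,k)$. Take $t=2$, $k\geq 2$, $n=\frac{q^k-1}{q-1}$, and let $C$ be the simplex code, whose generator matrix has as columns representatives of all points of $\PG(k-1,q)$. Any two columns are independent, so $C\in\cC_2(n,k)$, and $C$ is isolated (every subcode $D_P$ acquires a zero column); yet $\Omega=\PG(k-1,q)$ satisfies $\cS_0(\Omega)=\PG(k-1,q)$, so under the strict reading of Definition~\ref{saturated} it is $0$-saturating, not $1$-saturating. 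Hence your plan to derive $\cS_{t-2}(\Omega)\neq\PG(k-1,q)$ from the absence of dependent $t$-tuples among the columns cannot work. What is actually true --- and all that the paper's own proof establishes, since it never addresses the minimality clause --- is the equivalence of isolation with the covering condition $\cS_{t-1}(\Omega)=\PG(k-1,q)$ alone; this weaker reading is also the one used in part 3 of Corollary~\ref{t=2}. So either read ``$(t-1)$-saturating'' as meaning only $\cS_{t-1}(\Omega)=\PG(k-1,q)$, in which case your proof is already complete after your third paragraph, or record that the statement in its literal form needs this adjustment.
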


The following corollaries, holding for $t=1$ and $t=2$, are consequences of~Theorem~\ref{told}, Theorem~\ref{main thm 1} and Theorem~\ref{main thm 2}.
For Point 2. of Corollary~\ref{t=1}, see also \cite[Proposition 4]{KP17}. 

\begin{corollary}
	\label{t=1}
	The following hold.
	\begin{enumerate}
		\item $\Lambda_1(n,k)=\widetilde{\Lambda}_1(n,k)\cup \cI_1(n,k)$ where $\widetilde{\Lambda}_1(n,k)$ is connected.
		\item
		      The graph $\Lambda_1(n,k)$ is connected if and
		      only if $n<\frac{q^k-1}{q-1}$.
		\item A code $C\in{\cC}_1(n,k)$ is isolated if and only if the columns of
		      any generator matrix of $C$ are vector representatives of all the points of a $(k-1)$-dimensional projective space.
	\end{enumerate}
\end{corollary}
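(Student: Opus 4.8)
The plan is to prove each of the three points of Corollary~\ref{t=1} by specializing the general theorems to the case $t=1$.

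For Point~1, I would combine Theorem~\ref{main thm 1} with Point~1 of Theorem~\ref{told}. Theorem~\ref{main thm 1} asserts that $\widetilde{\Lambda}_t(n,k)$ is connected if and only if $\Delta_t(n,k-1)$ is connected, and that $\Lambda_t(n,k)=\widetilde{\Lambda}_t(n,k)\cup\cI_t(n,k)$. Setting $t=1$, Point~1 of Theorem~\ref{told} guarantees that $\Delta_1(n,k-1)$ is connected for any $q$ (here I should note that $\Delta_1(n,k-1)$ makes sense precisely when $\cC_1(n,k-1)\neq\emptyset$, which for $t=1$ holds as long as $k-1\geq 1$, i.e. $k\geq 2$; the degenerate small cases can be handled by the convention in the Remark following Theorem~\ref{main thm 1}). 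Hence $\widetilde{\Lambda}_1(n,k)$ is connected, giving the decomposition claimed.

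For Point~2, the graph $\Lambda_1(n,k)$ is connected exactly when it has no isolated vertices and its non-isolated part is connected. By Point~1 the non-isolated part is always connected, so $\Lambda_1(n,k)$ is connected if and only if $\cI_1(n,k)=\emptyset$, i.e. there are no isolated codes. The task therefore reduces to showing that $\cI_1(n,k)=\emptyset$ is equivalent to $n<\frac{q^k-1}{q-1}$. I would invoke the characterization in Theorem~\ref{main thm 2}: for $t=1$, a code is isolated if and only if the columns of a generator matrix represent a $0$-saturating set of $\PG(k-1,q)$, which by the discussion after Definition~\ref{saturated} means the columns represent \emph{all} $\frac{q^k-1}{q-1}$ points of $\PG(k-1,q)$. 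An $[n,k]$-code gives exactly $n$ columns, so such an isolated code exists if and only if $n\geq\frac{q^k-1}{q-1}$ (with no repeated points, as repetition would violate $t=1$ non-degeneracy only in the dual sense—here I must be careful that the $n$ columns are precisely the full point set, forcing $n=\frac{q^k-1}{q-1}$ for the minimal such code and $n\geq\frac{q^k-1}{q-1}$ to admit one). Negating, $\cC_1(n,k)$ contains no isolated code precisely when $n<\frac{q^k-1}{q-1}$, which yields the stated equivalence.

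Point~3 is essentially a restatement of Theorem~\ref{main thm 2} in the case $t=1$: the $0$-saturating sets of $\PG(k-1,q)$ are exactly the full point sets, so a code is isolated if and only if its generator matrix columns represent all points of a $(k-1)$-dimensional projective space. The main obstacle I anticipate is the careful bookkeeping in Point~2 around whether column multiplicities and the spanning condition force equality versus inequality in the bound; I would need to verify precisely that representing every point of $\PG(k-1,q)$ requires at least $\frac{q^k-1}{q-1}$ columns and that the non-degeneracy condition for $t=1$ does not forbid the configuration, so that the threshold $n<\frac{q^k-1}{q-1}$ is exactly the regime in which isolated codes cannot occur.
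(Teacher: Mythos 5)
Your proposal is correct and follows essentially the same route as the paper: Point~1 is obtained by combining Theorem~\ref{main thm 1} with Point~1 of Theorem~\ref{told} (exactly as the paper notes before Lemma~\ref{2 main thm1}, which also offers an optional direct argument via the all-one vector), and Points~2 and~3 reduce to the observation that for $t=1$ the only $0$-saturating set is the full point set of $\PG(k-1,q)$, so isolated codes exist precisely when $n\geq\frac{q^k-1}{q-1}$, matching the kernel-counting argument of Lemma~\ref{1 main thm1}. Your care about column multiplicities is well placed and resolves correctly: for $t=1$ repeated nonzero columns are permitted, so the threshold is exactly $n<\frac{q^k-1}{q-1}$.
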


\begin{corollary}
	\label{t=2}
	The following hold.
	\begin{enumerate}
		\item 	$\Lambda_2(n,k)=\widetilde{\Lambda}_2(n,k)\cup \cI_2(n,k)$
		      where $\widetilde{\Lambda}_2(n,k)$ is connected.
		\item The graph $\Lambda_2(n,k)$ is connected if and only if
		      \[n<\min\{|\Omega|\colon \Omega \text{ is a $1$-saturating set of }
			      \PG(k-1,q)\}.\]
		\item A code $C\in\cC_2(n,k)$ is isolated if and only if the $2$-secants of the projective set determined by the columns of any generator matrix of $C$ cover all the points of a $(k-1)$-dimensional projective space.
	\end{enumerate}
\end{corollary}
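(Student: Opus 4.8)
The plan is to derive all three assertions by specializing to $t=2$ the three theorems already available (Theorem~\ref{told}, Theorem~\ref{main thm 1}, Theorem~\ref{main thm 2}), the only real content being the numerical reformulation in Point~2. I would dispose of Point~3 first, since it is the most direct: it is Theorem~\ref{main thm 2} read with $t=2$, asserting that $C\in\cC_2(n,k)$ is isolated if and only if the columns of a generator matrix represent a $(t-1)=1$-saturating set of $\PG(k-1,q)$. By the discussion following Definition~\ref{saturated}, the defining condition $\cS_1(\Omega)=\PG(k-1,q)$ of a $1$-saturating set $\Omega$ says exactly that the $2$-secants $\langle P,P'\rangle$, with $P,P'\in\Omega$, cover every point of $\PG(k-1,q)$. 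Since $C$ has dimension $k$ and dual distance at least $3$, the projective set $\Omega$ determined by its columns consists of $n$ distinct points spanning $\PG(k-1,q)$, so this is precisely the statement of Point~3.

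For Point~1 I would apply Theorem~\ref{main thm 1} with $t=2$. It yields, with no restriction on $q$, both the decomposition $\Lambda_2(n,k)=\widetilde{\Lambda}_2(n,k)\cup\cI_2(n,k)$ and the equivalence that $\widetilde{\Lambda}_2(n,k)$ is connected if and only if $\Delta_2(n,k-1)$ is connected. What remains is to quote the connectivity of $\Delta_2(n,k-1)$, which is the connectivity part of Theorem~\ref{told}(2) (established, unconditionally on $q$, in~\cite{KPP18}). Hence $\widetilde{\Lambda}_2(n,k)$ is connected and Point~1 follows.

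Point~2 then combines Point~1 with the shape of the graph. Because the vertices of $\cI_2(n,k)$ have no neighbours in $\Lambda_2(n,k)$ and $\widetilde{\Lambda}_2(n,k)$ is connected, the whole graph is connected precisely when $\cI_2(n,k)=\emptyset$ (leaving aside the trivial cases where $\cC_2(n,k)$ is empty or a single vertex). By Theorem~\ref{main thm 2}, an isolated code of length $n$ exists if and only if there is a set $\Omega$ of $n$ distinct points of $\PG(k-1,q)$ with $\cS_1(\Omega)=\PG(k-1,q)$; writing $m$ for the minimal size of such a set (equivalently, the minimal size of a $1$-saturating set), I claim $\cI_2(n,k)\neq\emptyset$ exactly for $m\le n\le\frac{q^k-1}{q-1}$. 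Indeed, adjoining further points to a minimal such $\Omega$ preserves the covering property $\cS_1=\PG(k-1,q)$ and keeps the points distinct and spanning, so a set with covering secants, hence a projective code, of every cardinality from $m$ up to the full point set of size $\frac{q^k-1}{q-1}$ exists. Therefore $\cI_2(n,k)=\emptyset$ if and only if $n<m$, which is Point~2.

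The steps are short, but one spot deserves care: the equivalence in Point~2 rests on this monotonicity-plus-realizability argument, namely that a point set with covering secants exists in every admissible cardinality $n\ge m$ and is genuinely realized by a vertex of $\cC_2(n,k)$, together with checking the top endpoint $n=\frac{q^k-1}{q-1}$, where the full point set still has covering secants and so still yields an isolated code. This bookkeeping, together with importing the unconditional connectivity of $\Delta_2(n,k-1)$ used in Point~1, is where I expect the (otherwise modest) difficulty to lie.
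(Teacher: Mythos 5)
Your overall route---specializing Theorems~\ref{main thm 1} and~\ref{main thm 2} to $t=2$ and reducing Point~2 to the non-existence of isolated vertices---is exactly the paper's, and your treatment of Points~2 and~3 (including the monotonicity argument that realizes an isolated projective code of every length from the minimal covering size $m$ up to $\frac{q^k-1}{q-1}$) matches what the paper does, if anything with more explicit bookkeeping.

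The gap is in Point~1. You need $\Delta_2(n,k-1)$ to be connected \emph{for every} $q$, but you source this from Theorem~\ref{told}(2), which asserts connectivity only under the hypothesis $q\geq{n\choose 2}$; the corollary carries no such restriction on $q$, so the citation does not cover the claim (and the paper attributes only the conditional statement to~\cite{KPP18}, not the unconditional one). The unconditional connectivity of $\Delta_2(n,k)$ is genuinely new content at this point of the paper: it is proved separately as Lemma~\ref{connessione Pankov t=2}, by first using Theorem~\ref{equiv} (the $\cM(V)$-orbit of a code lies in a single connected component of $\Delta_2$) to normalize the two generator matrices so that proportional columns sit in the same positions and coincide, and then walking from $G_C$ to $G_{C'}$ one column at a time, each step changing the rank of the stacked $2k\times n$ matrix by at most one and hence giving either equality or $\Delta$-adjacency. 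Without supplying this lemma (or an equivalent argument), your proof of Point~1---and hence of Point~2, which rests on it---is incomplete for small $q$.
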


\begin{theorem}
	\label{large-q}
	If
	$q>{n\choose t}$ and $t<k$, then  the graph $\Lambda_t(n,k)$ is
	connected.
	For $t=k$, the graph $\Lambda_{k}(n,k)$ is totally disconnected.
\end{theorem}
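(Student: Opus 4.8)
The plan is to prove the two assertions separately, combining the structural results already available with one elementary counting estimate. For the first assertion, note that connectedness of $\Lambda_t(n,k)$ amounts to two things at once: that the subgraph $\widetilde{\Lambda}_t(n,k)$ on the non-isolated codes is connected, and that no isolated vertices occur at all, i.e. $\cI_t(n,k)=\emptyset$. For the former I would invoke Theorem~\ref{main thm 1}: since $t<k$ we have $t\le k-1$, so $\Delta_t(n,k-1)$ is well defined, and the hypothesis $q>{n\choose t}$ in particular gives $q\ge{n\choose t}$, so $\Delta_t(n,k-1)$ is connected by Theorem~\ref{told} (for $t=1$ this holds for every $q$, and for $t\ge 2$ under the stated bound). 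Theorem~\ref{main thm 1} then yields that $\widetilde{\Lambda}_t(n,k)$ is connected.

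It remains to rule out isolated codes. By Theorem~\ref{main thm 2}, a code $C\in\cC_t(n,k)$ is isolated if and only if the columns of a generator matrix represent a $(t-1)$-saturating set $\Omega$ of $\PG(k-1,q)$, which in particular requires $\cS_{t-1}(\Omega)=\PG(k-1,q)$; that is, the spans of the $t$-subsets of $\Omega$ must cover the whole projective space. Since $C$ has dual minimum distance at least $t+1$, any $t$ of its columns are linearly independent, so each $t$-subset of $\Omega$ spans a $(t-1)$-dimensional projective subspace with exactly $\frac{q^t-1}{q-1}$ points, and there are at most ${n\choose t}$ such subsets. Hence
\[
	|\cS_{t-1}(\Omega)|\le {n\choose t}\,\frac{q^t-1}{q-1}.
\]
Using $q>{n\choose t}$ together with $t+1\le k$ one gets
\[
	{n\choose t}(q^t-1)<q(q^t-1)=q^{t+1}-q\le q^k-q<q^k-1,
\]
so $|\cS_{t-1}(\Omega)|<\frac{q^k-1}{q-1}=|\PG(k-1,q)|$. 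Thus $\cS_{t-1}(\Omega)\ne\PG(k-1,q)$, no $\Omega$ can be $(t-1)$-saturating, and therefore $\cI_t(n,k)=\emptyset$. Combined with the previous paragraph, $\Lambda_t(n,k)=\widetilde{\Lambda}_t(n,k)$ is connected.

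For the second assertion I would argue that $\cC_k(n,k-1)=\emptyset$. Indeed, a member of $\cC_k(n,k-1)$ would be an $[n,k-1]$-code with dual minimum distance at least $k+1$, forcing every $k$ columns of a $(k-1)\times n$ generator matrix to be linearly independent; but any $k$ columns form a submatrix of rank at most $k-1<k$, a contradiction. Since adjacency in $\Lambda_k(n,k)$ requires $X\cap Y\in\cC_k(n,k-1)$, no edge can exist, so $\Lambda_k(n,k)$ is totally disconnected (every vertex, if any, being isolated); this also confirms the case $t=k$ mentioned in the remark after Theorem~\ref{main thm 1}.

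The only delicate point is the counting step: one must ensure that possibly repeated or projectively proportional columns do not inflate the estimate. For $t\ge 2$ the condition of dual distance at least $t+1\ge 3$ already forces the columns to represent pairwise distinct points, and in general $|\Omega|\le n$ gives ${|\Omega|\choose t}\le{n\choose t}$ while the count of distinct $t$-subspaces spanned can only decrease, so the displayed inequality chain is robust. Thus the main care required is the boundary bookkeeping that keeps the inequality $t+1\le k$ available throughout.
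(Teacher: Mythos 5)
Your proof is correct and follows essentially the same route as the paper: connectedness of $\widetilde{\Lambda}_t(n,k)$ via Theorem~\ref{main thm 1} and Theorem~\ref{told}, exclusion of isolated codes by the counting bound ${n\choose t}(q^t-1)<q^k-1$ (which the paper's Lemma~\ref{l-emp} carries out directly on the kernels of the surjective maps $\varphi\in\cM$ --- equivalent to your saturating-set formulation via Theorem~\ref{main thm 2}), and the $t=k$ case via the rank obstruction on $(k-1)\times n$ generator matrices. The only difference is cosmetic: the paper records the slightly sharper sufficient condition $q>{n\choose t}^{1/(k-t)}$ in the lemma before specializing to $q>{n\choose t}$.
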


In light of Theorem~\ref{large-q} it makes sense to ask for given
$t$, $k$ and $q$ what is the structure of the graph $\Lambda_t(n,k)$, as
$n$ grows, under the condition $\cC_t(n,k)\neq\emptyset$.
Keeping in mind Definition~\ref{isolated code} of isolated code,
we can consider the following two parameters
\[ \nu_t(k;q):=\min\{ n: \Lambda_t(n,k)\text{ is disconnected}\}, \]
\[ \nu_t^+(k;q):=\min\{ n: \cI_t(n,k)\neq\emptyset\}. \]
If a graph has an isolated vertex then it is disconnected, so  we have
\[ \nu_t(k;q)\leq\nu_t^+(k;q).\]
Clearly, if $\nu_t(k;q)=\nu_t^+(k;q)$ then the graph $\Lambda_t(n,k)$ is disconnected if and only if it has isolated vertices.
We conjecture the following.
\begin{conjecture}
	\label{conj}
	For any $q$, $k$ and $t$ with $k\geq t$,
	$\nu_t(k;q)=\nu_t^+(k;q)$.
\end{conjecture}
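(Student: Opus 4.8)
The plan is to first reduce the conjecture to a connectivity statement about the graphs $\Delta_t(n,k-1)$, and then attack that statement. Since $\nu_t^+(k;q)$ is by definition the least $n$ for which $\cI_t(n,k)\neq\emptyset$, for every $n<\nu_t^+(k;q)$ there are no isolated vertices, so $\Lambda_t(n,k)=\widetilde{\Lambda}_t(n,k)$. By Theorem~\ref{main thm 1} the latter is connected exactly when $\Delta_t(n,k-1)$ is connected. As the inequality $\nu_t(k;q)\leq\nu_t^+(k;q)$ is already known, proving the reverse inequality — and hence the conjecture — is equivalent to establishing
\[ \text{$\Delta_t(n,k-1)$ is connected for every $n$ with $\cC_t(n,k-1)\neq\emptyset$ and $n<\nu_t^+(k;q)$.} \]
Equivalently, writing $A$ for the least $n$ at which $\Delta_t(n,k-1)$ first becomes disconnected, the conjecture asserts $A\geq\nu_t^+(k;q)$: the Grassmann graph of $(k-1)$-codes does not break apart strictly before isolated $k$-codes appear.

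Next I would try to prove this connectivity statement by induction on the dimension, using the same reduction one level down. Applying Theorem~\ref{main thm 1} at a dimension $m$ gives that $\widetilde{\Lambda}_t(n,m)$ is connected iff $\Delta_t(n,m-1)$ is; if moreover $\cI_t(n,m)=\emptyset$ then $\Lambda_t(n,m)=\widetilde{\Lambda}_t(n,m)$ is connected, and since $\Lambda_t(n,m)$ is a spanning subgraph of $\Delta_t(n,m)$ we obtain the implication
\[ \Delta_t(n,m-1)\text{ connected and }\cI_t(n,m)=\emptyset \ \Longrightarrow\ \Delta_t(n,m)\text{ connected}. \]
Iterating this from a base dimension up to $m=k-1$ would settle the statement, provided no isolated codes occur at the intermediate levels. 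For $t=1$ and $t=2$ this programme already succeeds — $\Delta_1(n,k-1)$ is connected for all $q$ by Theorem~\ref{told}, and the analogous fact underlies Corollary~\ref{t=2} — which is precisely why Corollaries~\ref{t=1} and~\ref{t=2} establish $\nu_t(k;q)=\nu_t^+(k;q)$ in those two cases.

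The main obstacle, which appears only for $t\geq 3$ and small $q$, is twofold. First, the iteration requires $\cI_t(n,m)=\emptyset$ at each intermediate dimension $m<k$; but by Theorem~\ref{main thm 2} a $(t-1)$-saturating set of the smaller space $\PG(m-1,q)$ can be much smaller, so one typically has $\nu_t^+(m;q)<\nu_t^+(k;q)$, and hence one may well have $n<\nu_t^+(k;q)$ while $\cI_t(n,m)\neq\emptyset$ for some $m<k$. At such a level $\Lambda_t(n,m)$ is disconnected and the implication above fails, so one must instead prove that $\Delta_t(n,m)$ is nevertheless connected by exploiting exactly the edges of $\Delta_t(n,m)$ that are \emph{not} edges of $\Lambda_t(n,m)$, namely adjacencies $X\sim_{\Delta}Y$ whose intersection $X\cap Y$ falls outside $\cC_t(n,m-1)$. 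Second, even granting the intermediate levels, connectivity of $\Delta_t$ for arbitrary $q$ exceeds what is presently available: \cite{ILK21} guarantees it only under $q\geq\binom{n}{t}$, whereas the conjecture demands connectivity all the way up to $n=\nu_t^+(k;q)-1$ with no restriction on $q$.

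Concretely, I would attempt a direct path construction in $\Delta_t(n,k-1)$: given $X,Y\in\cC_t(n,k-1)$, build a sequence of single-step Grassmann moves remaining inside $\cC_t$, using the geometric dictionary of Theorem~\ref{main thm 2} to control which column exchanges preserve the property that every $t$ columns stay linearly independent. The hard part will be carrying this out for small $q$, where the scarcity of available projective points is precisely the phenomenon that produces isolated codes; showing that this scarcity cannot disconnect $\Delta_t(n,k-1)$ any earlier than it forces a full $(t-1)$-saturating configuration — i.e. an isolated $k$-code — is the crux of the conjecture, and is presumably the reason it is stated as a conjecture rather than proved as a theorem.
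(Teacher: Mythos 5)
The statement you were asked to prove is stated in the paper as a \emph{conjecture}: the authors give no proof, observing only that it holds for $t=1$ and $t=2$ via Corollaries~\ref{t=1} and~\ref{t=2}. Your opening reduction is correct and is exactly the reduction implicit in the paper: since $\nu_t(k;q)\leq\nu_t^+(k;q)$ is already known, and since for $n<\nu_t^+(k;q)$ one has $\cI_t(n,k)=\emptyset$ and hence $\Lambda_t(n,k)=\widetilde{\Lambda}_t(n,k)$, Theorem~\ref{main thm 1} (equivalently Corollary~\ref{co:con}) shows that the conjecture is equivalent to the connectivity of $\Delta_t(n,k-1)$ for every admissible $n<\nu_t^+(k;q)$. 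Your iterated implication ($\Delta_t(n,m-1)$ connected and $\cI_t(n,m)=\emptyset$ imply $\Delta_t(n,m)$ connected, because $\Lambda_t(n,m)$ is then a connected spanning subgraph of $\Delta_t(n,m)$) is also sound, and your explanation of why the programme closes for $t=1,2$ matches the paper.

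However, what you have written is a programme, not a proof, and you say so yourself. The essential step --- that for $t\geq 3$ and small $q$ the graph $\Delta_t(n,k-1)$ cannot become disconnected strictly before $(t-1)$-saturating column sets (i.e.\ isolated codes in $\cC_t(n,k)$) appear --- is precisely the open content of the conjecture, and neither your sketch of a direct path construction in $\Delta_t(n,k-1)$ nor the induction through intermediate dimensions (which, as you note, breaks down when $\cI_t(n,m)\neq\emptyset$ for some $m<k$) resolves it. The known connectivity results for $\Delta_t$ (Theorem~\ref{told}) require $q\geq\binom{n}{t}$ and do not cover the range needed. So the gap is genuine: your reduction is correct and faithful to the paper's framework, but the conjecture remains unproved, exactly as in the paper.
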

We observe that Conjecture~\ref{conj} holds true for $t=1$ by Corollary~\ref{t=1} and for $t=2$ by Corollary~\ref{t=2}.

\par\medskip
Moving to the geometry $\cP_t(n,k)$ whose collinearity graph is
$\Lambda_t(n,k)$,
we shall prove that the inclusion map $\iota$ of
$\cP_{t}(n,k)$ in the Grassmann geometry $ \cG(n,k)$ has the property that
lines of $\cP_t(n,k)$ are mapped into lines of $\cG(n,k)$ and
the preimage of any line of $\cG(n,k)$ contained
in the image of $\iota$ is still a line of $\cP_t(n,k).$ According to~\cite{transp}, this is exactly the definition for an embedding to be {\it transparent} (see Section~\ref{transparenza}).

\begin{theorem}
	\label{t0}
	The inclusion map $\iota:\cP_{t}(n,k)\to\cG(n,k)$ is a transparent embedding.
\end{theorem}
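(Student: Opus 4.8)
The plan is to verify directly the two defining conditions of a transparent embedding, as recalled in Section~\ref{transparenza} following \cite{transp}: first, that every line of $\cP_t(n,k)$ is carried by $\iota$ into a line of $\cG(n,k)$; and second, that whenever a line $m$ of $\cG(n,k)$ has all of its points lying in $\iota(\cC_t(n,k))=\cC_t(n,k)$, the set $m$ is the image of a line of $\cP_t(n,k)$. The first condition is essentially immediate from the reformulation of the lines already noted in the Introduction: a line $\ell_{X,Y}$ with $X\in\cC_t(n,k-1)$ and $\dim Y=k+1$ consists of \emph{all} $k$-subspaces $Z$ with $X\subset Z\subset Y$, because each such $Z$ contains $X\in\cC_t(n,k-1)$ and is therefore automatically a member of $\cC_t(n,k)$. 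Hence $\iota(\ell_{X,Y})$ is exactly the Grassmann pencil determined by the pair $(X,Y)$, a genuine line of $\cG(n,k)$ carrying its full complement of $q+1\geq 3$ points; the case $k=n-1$ is identical with $Y=V$. This settles the first condition.

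For the second condition, fix a line $m$ of $\cG(n,k)$ all of whose points belong to $\cC_t(n,k)$. Such an $m$ is the pencil of all $k$-subspaces $Z$ with $X\subset Z\subset Y$ for a uniquely determined pair $(X,Y)$ with $\dim X=k-1$ and $\dim Y=k+1$ (here $X$ is the intersection and $Y$ the span of the points of $m$; if $k=n-1$ then $Y=V$). By the reformulation above, $m$ is the image of the line $\ell_{X,Y}$ of $\cP_t(n,k)$ as soon as we know that $X\in\cC_t(n,k-1)$. Thus the whole matter reduces to the claim: \emph{if every $k$-subspace $Z$ with $X\subset Z\subset Y$ lies in $\cC_t(n,k)$, then $X\in\cC_t(n,k-1)$.}

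I would prove this claim in contrapositive form, exhibiting, under the assumption $X\notin\cC_t(n,k-1)$, at least one $Z$ on the pencil with $Z\notin\cC_t(n,k)$. Choose a basis $e_1,\dots,e_{k-1}$ of $X$ and extend it to a basis $e_1,\dots,e_{k-1},e_k,e_{k+1}$ of $Y$, so that the members of the pencil are the spaces $Z_{[\alpha:\beta]}=\langle e_1,\dots,e_{k-1},\alpha e_k+\beta e_{k+1}\rangle$ for $[\alpha:\beta]\in\PG(1,q)$. Writing $c_i$ for the $i$-th column of the generator matrix of $X$ built from $e_1,\dots,e_{k-1}$, the hypothesis $X\notin\cC_t(n,k-1)$ means that some set $S$ of columns with $|S|\leq t$ is linearly dependent, say $\sum_{i\in S}a_i c_i=0$ with the $a_i$ not all zero. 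The $i$-th column of the generator matrix of $Z_{[\alpha:\beta]}$ is $c_i$ with the single extra entry $\alpha e_{k,i}+\beta e_{k+1,i}$ appended, so the same set $S$ is dependent in $Z_{[\alpha:\beta]}$ precisely when $\alpha u+\beta u'=0$, where $u:=\sum_{i\in S}a_i e_{k,i}$ and $u':=\sum_{i\in S}a_i e_{k+1,i}$. This linear equation in $[\alpha:\beta]$ always admits a solution in $\PG(1,q)$ (every $[\alpha:\beta]$ when $u=u'=0$, and exactly one otherwise), so for the corresponding $Z$ the $|S|\leq t$ columns indexed by $S$ are dependent, forcing the dual distance of $Z$ to be at most $t$ and hence $Z\notin\cC_t(n,k)$.

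I expect the only real work to be the linear-algebra step of the last paragraph; everything else is bookkeeping resting on the Introduction's observation that enlarging a code cannot lower its dual distance. The one point to watch is purely organizational, namely treating the boundary case $k=n-1$ (where the top space of the pencil is forced to be $V$) in parallel with the generic case, and confirming that the pencils involved are genuine lines carrying $q+1\geq 3$ points, so that no line of either geometry degenerates.
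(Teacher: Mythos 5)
Your proof is correct, and its overall skeleton matches the paper's: both reduce transparency to the claim that if every member of a pencil $\{Z: X\subset Z\subset Y\}$ (with $\dim X=k-1$, $\dim Y=k+1$) lies in $\cC_t(n,k)$, then the base $X$ lies in $\cC_t(n,k-1)$, using the observation that the $\cC_t$-condition on the top space $Y$ and on the members of the pencil is automatic once the base is in $\cC_t(n,k-1)$. Where you genuinely differ is in the proof of that key claim. The paper argues by contradiction starting from two codes $X',Y'\in\cC_t(n,k)$ spanning the pencil whose intersection $D$ fails to be in $\cC_t(n,k-1)$: it exploits the membership of \emph{both} $X'$ and $Y'$ in $\cC_t(n,k)$ to show that the row $(y_{i_1},\dots,y_{i_t})$ is a combination of the rows of the dependent $t$-column submatrix of $G_D$ and of $(x_{i_1},\dots,x_{i_t})$ with a nonzero coefficient $\lambda$, and from this relation it exhibits the explicit bad member $Z_{-\mathbf{x}+\lambda\mathbf{y}}$. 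Your argument is more direct and slightly stronger: you take the dependency $\sum_{i\in S}a_i c_i=0$ in the base of the pencil and observe that it lifts to a dependency among the corresponding columns of $Z_{[\alpha:\beta]}$ whenever the single homogeneous linear condition $\alpha u+\beta u'=0$ holds, which always has a solution in $\PG(1,q)$; no hypothesis that any particular member of the pencil belongs to $\cC_t(n,k)$ is needed. The only imprecision is your phrase ``precisely when'': a solution of $\alpha u+\beta u'=0$ certainly produces a dependency of the columns indexed by $S$ (which is all you use), but other dependencies with different coefficients could occur for other values of $[\alpha:\beta]$, so the equivalence should be weakened to an implication; this does not affect the argument. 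Both routes prove the theorem; yours trades the paper's rank computation and contradiction setup for a one-line root-counting argument on $\PG(1,q)$, at the cost of no additional hypotheses.
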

Since $\cG(n,k)$ is {\it projectively embeddable} by means of the Pl\"ucker embedding
$\varepsilon_k$ in the projective space $\PG(\bigwedge^kV)$, Theorem~\ref{t0}  implies that
the geometry $\cP_t(n,k)$ is also projectively embeddable in $\PG(\bigwedge^k V)$ by
means of the restriction of $\varepsilon_k$ to $\cP_t(n,k),$ and
this embedding is transparent.

The resulting point set $\cV_t:=\varepsilon_k(\cC_t(n,k))$ is a subset of the Grassmann variety in $\PG(\bigwedge^k V)$ and,
by Theorem~\ref{t0} and the transparency of the Pl\"ucker
embedding $\varepsilon_k,$
it has the property that
for any two distinct points $P,Q\in\cV_t,$ the projective line joining
$P$ and $Q$ is fully contained in $\cV_t$ if and only
if $\varepsilon_k^{-1}(P)$ and $\varepsilon_k^{-1}(Q)$ are
adjacent in $\Lambda_t(n,k)$.

In particular, it is possible to reconstruct the graph $\Lambda_t(n,k)$
from just looking at the point set $\cV_t$.
We observe that this property does not hold for the graph
$\Delta_t(n,k)$, i.e. it can happen that $X$ and $Y$ are adjacent in
$\Delta_t(n,k)$ but the projective line $\ell$ joining
$\varepsilon_k(X)$ and $\varepsilon_k(Y)$ is not fully contained in
$\cV_t$, i.e.
there exists a point $Z$ on $\ell$ such that
$\varepsilon^{-1}(Z)\not\in \cC_t(n,k).$




\bigskip
\noindent {\textbf{Structure of the paper}}
In Section~\ref{Sec2} we will set the notation and recall the basic results regarding the objects of our interest. In Section~\ref{general properties} we shall focus on the graph $\Lambda_t(n,k)$ proving some general results for arbitrary values of $t$,$k$ and $n.$
In Section~\ref{sec3} we will prove our main theorems; in particular we will prove Corollary~\ref{t=1} in Subsection~\ref{non-deg codes} and Corollary~\ref{t=2} in Subsection~\ref{projective codes}. The general case is treated in Subsection~\ref{general codes} and Theorem~\ref{t0} is proved in Section~\ref{transparenza}.

\section{Preliminaries}\label{Sec2}
As in Introduction, let $V:=V(n,q)$ be a $n$-dimensional vector space over
the finite field $\FF_q$ and $0<k<n$.
The  \emph{$k$-Grassmann geometry} is defined as the point-line
geometry $\cG(n,k)$ whose points are the $k$-dimensional subspaces
of $V$ and whose lines are the sets
\begin{itemize}
	\item $\ell_{X,Y}:=\{Z: X\subset Z\subset Y\}$ with
	      $\dim(X)=k-1$, $\dim(Y)=k+1$ if $k<n-1$;
	\item $\ell_X:=\{Z: X\subset Z\}$ with
	      $\dim(X)=n-2$ if $k=n-1$.
\end{itemize}

This geometry has been widely investigated (see for example~\cite{S11}) and its collinearity graph is the Grassmann graph
$\Gamma(n,k)$ as defined in Introduction.

Suppose that $\fB=({\mathbf{e}}_1,\dots,{\mathbf{e}}_n)$ is a given fixed
basis of $V$; henceforth we shall always
write the coordinates of the vectors in $V$ with respect to $\fB$.

Given two vectors $\mathbf{x}=\sum_{i=1}^n{x_i}{\mathbf e}_i$ and $\mathbf{y}=\sum_{i=1}^n{y_i}{\mathbf e}_i$,
the Hamming distance (with respect to the basis $\fB$) between $\mathbf{x}$ and $\mathbf{y}$
is $d(\mathbf{x},\mathbf{y}):=|\{i: x_i\neq y_i\}|.$
In this setting, a \emph{$[n,k]$-linear code} $C$ is just
a $k$-dimensional vector subspace of $V$ together with the restriction to
$C\times C$ of the Hamming distance induced by $\fB$.

If $B_C$  is an ordered  basis of $C$,  a \emph{generator matrix} $G_C$
for $C$ is the $k\times n$ matrix whose rows are the components
of the elements of $B_C$ with respect to $\fB$.
Given a $[n,k]$-linear code $C$, its \emph{dual code}
is the $[n,n-k]$-linear code $C^{\perp}$ given by
\[ C^{\perp}:=\{ \mathbf{v}\in V: \forall \mathbf{c}\in C, \mathbf{v}\cdot\mathbf{c}=0 \} \]
where $\cdot$ denotes the standard symmetric bilinear form on $V$
given by
\[ (v_1{\mathbf e}_1+\dots+v_n{\mathbf e}_n)\cdot (c_1{\mathbf e}_1+\dots+c_n{\mathbf e}_n)=v_1c_1+\dots+v_nc_n. \]
Since the bilinear form  ``$\cdot$'' is non-degenerate, $C^{\perp\perp}=C$.
We say that $C$ has \emph{dual minimum distance} at least $t+1$ if
and only if the minimum Hamming distance of the dual code $C^{\perp}$ of $C$ is at least $t+1$. As mentioned in  Introduction, this condition is equivalent
to saying that for any generator matrix $G_C$ of $C$, any set of $t$
columns of $G_C$ is linearly independent.

For $t\in \mathbb{N}$ we already defined the set $\cC_t(n,k)$ of all $[n,k]$-linear codes with dual minimum distance at least $t+1$ and the point-line geometry $\cP_t(n,k)=(\cC_{t}(n,k),\cL_t(n,k))$
whose collinearity graph is precisely $\Lambda_t(n,k).$
The geometry $\cP_t(n,k)$, clearly, can be regarded as a subgeometry of $\cG(n,k).$


We recall that a graph $\Gamma$ is \emph{connected} whenever given any two of its vertices (say $P$ and $Q$ with $P\neq Q$)
there exists a path in $\Gamma$, i.e.
a sequence of non-repeated adjacent vertices, starting at $P$ and
ending at $Q$. We consider the empty graph to be connected.
The length of a path between $P$ and $Q$ is defined as the smallest cardinality of a path connecting $P$ and $Q$ diminished by $1$; the distance $d_{\Gamma}(P,Q)$ between $P$ and $Q$ in $\Gamma$ is the length of a shortest path between $P$ and $Q$;  a connected component of a graph is a
maximal
non-empty subset of its vertices such that the subgraph induced on it is connected.
The diameter of a graph is the maximum distance between any two vertices; see~\cite{BM82} for a general reference about graph theory.
In general we say that a subgraph $\Gamma'$ is isometrically embedded in a larger graph $\Gamma$ if there exists a distance-preserving map $\Gamma'\to
\Gamma$; see~\cite{GW85}.
\medskip

We already mentioned in Introduction the subgraph $\Delta_{t}(n,k)$ of the Grassmann graph having as vertices the $[n,k]$-linear codes $\cC_t(n,k)$ with dual minimum distance at least $t+1.$
The graph $\Delta_t(n,k)$ was first introduced in~\cite{KP16} where the authors focused mostly on the case of non-degenerate linear codes, i.e. on $\Delta_1(n,k)$.
In~\cite{KPP18} these results, among others, were extended to the  graph $\Delta_2(n,k)$
of projective linear codes. In~\cite{ILK21} we considered properties of the graph
$\Delta_t(n,k)$ for arbitrary $t$.

Comparing the graph $\Lambda_t(n,k)$ with the graph $\Delta_t(n,k)$, we observe that $\Lambda_t(n,k)$ has the same vertices as
$\Delta_t(n,k)$, but less edges. In particular, there exist codes
$A,B\in\cC_t(n,k)$ such that $A\sim_{\Delta}B$, but $A\not\sim_{\Lambda}B$.
Thus $d_{\Lambda}(A,B)>d_{\Delta}(A,B)$ and $\Lambda_t(n,k)$ is not
isometrically embedded in $\Delta_t(n,k)$; consequently, $\Lambda_t(n,k)$
is not isometrically embedded in the Grassmann graph either.

\subsection{Equivalent codes}
Given the basis $\fB$, the \emph{monomial group} $\cM(V)$ of $V$ consists of all linear
transformations of $V$ which map the set of subspaces
$\{ \langle {\mathbf e}_1\rangle,\dots,\langle {\mathbf e}_n\rangle \}$ in itself.
It is straightforward to see that
${\cM(V)}\cong \FF_q^{\times}\wr S_n$, where $\wr$ denotes the wreath product  and $S_n$ is the symmetric group of order $n$; see~\cite[Chapter 8,\S5]{MS} for more details.

\begin{definition}\label{def equiv}
	Two $[n,k]$-linear codes $X$ and $Y$ are \emph{equivalent}
	if there exists a monomial transformation $\rho\in\cM(V)$ such
	that $X=\rho(Y)$.
\end{definition}

Equivalence between linear codes is an equivalence relation and
the equivalence class of a code $X$ corresponds to the orbit of
$X$ under the action of $\cM(V)$ on the subspaces
of $V$.

A monomial transformation $\rho$ is given by a linear transformation of $V$ which sends the code $Y$ (regarded as a $k$-dimensional subspace of $V$), into the code $X=\rho(Y).$
In particular, if $G_Y$ is a generator matrix for $Y$, then the matrix $\rho(G_Y)$ obtained from $G_Y$ by applying $\rho$ to each of its rows, is a generator matrix for $X$. We put $G_{\rho(Y)}:=\rho(G_Y).$ Observe that the transformation induced by $\rho$ acts on the columns of $G_Y.$
Indeed,  if $\rho$ is represented by a $n\times n$ matrix $R$ with respect to the
basis $\fB$, then $G_{\rho(Y)}=G_Y\cdot R.$

The matrix $R$ factors as a product $R=PD$, where $P$ is
a permutation matrix and $D$ is a non-singular diagonal matrix.
In the rest of this paper, we shall denote, with a slight abuse of notation,
by the same symbol $\rho$ not only the isometry $\rho:V\to V$, but
also the corresponding map acting on the generator matrices of the codes.

If $X$ is a $[n,k]$-linear code  with generator matrix
$G_X$ and $A\in\GL(k,\FF_q)$, then $G'_X=AG_X$ is also a generator matrix for $X$.

It follows that
two $[n,k]$-linear codes $X$ and $Y$ with generator matrices
respectively $G_X$ and $G_Y$ are \emph{equivalent} if there
exists $A\in\GL(k,\FF_q)$, a permutation matrix $P\in\GL(n,\FF_q)$
and a diagonal matrix $D\in\GL(n,\FF_q)$ such that
\[ G_X=AG_Y(PD). \]
In particular,
two codes are equivalent if and only if any two of
their generator matrices belong to the same orbit under
the action of the group $\mathrm{GL}(k,\FF_q):(\FF_q^{\times}\wr S_n)$, where $\mathrm{GL}(k,\FF_q)$ acts on the left
of the generator matrix and fixes each code, regarded as a subspace,
while $\FF_q^{\times}\wr S_n$ acts on the right.

The following lemma provides explicit generators for
the group $\cM(V)$ and describe their action on the set of
the linear codes.
\begin{lemma}\label{struttura}
	Let $\fB=({\mathbf e}_1,\dots,{\mathbf e}_n)$ be a fixed basis of $V$ and $\alpha$
	a generator of $\FF_q^{\times}$.
	A set of generators for the monomial group $\cM(V)$ with respect to
	$\fB$ is given by
	all the linear functions $\tau_{ij}:V\to V$ with $1\leq i<j\leq n$
	and $\mu:V\to V$
	such that
	\[ \tau_{ij}({\mathbf e}_k):=\begin{cases}
			{\mathbf e}_i & \text{ if $k=j$ }              \\
			{\mathbf e}_j & \text{ if $k=i$ }              \\
			{\mathbf e}_k & \text{ if $k\not\in\{i,j\}$, }
		\end{cases} \qquad
		\mu({\mathbf e}_k):=\begin{cases}
			\alpha {\mathbf e}_1 & \text{ if $k=1$ }     \\
			{\mathbf e}_k        & \text{ if $k\neq 1$.}
		\end{cases} \]
	In particular, the map $\tau_{ij}$
	is represented with respect to $\fB$ by
	a permutation matrix, while $\mu$ is
	represented by the matrix
	$\mathrm{diag}(\alpha,1,1,\dots,1)$.
\end{lemma}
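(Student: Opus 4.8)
The plan is to realise $\cM(V)$ as an internal product of a diagonal subgroup and a permutation subgroup, and then to verify that the listed maps generate each of these two parts. First I would record the normal form of a monomial map: since any $\rho\in\cM(V)$ permutes the set $\{\langle{\mathbf e}_1\rangle,\dots,\langle{\mathbf e}_n\rangle\}$, there are a permutation $\sigma\in S_n$ and scalars $\lambda_1,\dots,\lambda_n\in\FF_q^\times$ with $\rho({\mathbf e}_i)=\lambda_i{\mathbf e}_{\sigma(i)}$ for all $i$. Letting $P_\sigma$ denote the permutation map ${\mathbf e}_i\mapsto{\mathbf e}_{\sigma(i)}$ and $D:=\mathrm{diag}(\lambda_1,\dots,\lambda_n)$, a one-line check gives $\rho=P_\sigma D$, in accordance with the factorisation $R=PD$ already recorded above. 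Hence it suffices to show that the $\tau_{ij}$ and $\mu$ together generate every $P_\sigma$ and every diagonal map $D$.

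For the permutation part, the map $\tau_{ij}$ is exactly the permutation map attached to the transposition $(i\,j)\in S_n$; since the transpositions generate $S_n$, the family $\{\tau_{ij}:1\le i<j\le n\}$ generates the full subgroup $\{P_\sigma:\sigma\in S_n\}$ of permutation matrices.

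For the diagonal part, the key step is to produce the elementary scalings from $\mu$ by conjugation. For $i\ge 2$ I would set $\mu_i:=\tau_{1i}\,\mu\,\tau_{1i}$ (using that $\tau_{1i}$ is an involution), and $\mu_1:=\mu$. A direct computation then shows $\mu_i({\mathbf e}_i)=\alpha{\mathbf e}_i$ and $\mu_i({\mathbf e}_k)={\mathbf e}_k$ for $k\ne i$, i.e. $\mu_i$ is the diagonal map with $\alpha$ in position $i$ and $1$ elsewhere. Because $\alpha$ generates $\FF_q^\times$, the powers $\mu_i^{\,s}$ realise every scaling ${\mathbf e}_i\mapsto\lambda{\mathbf e}_i$ with $\lambda\in\FF_q^\times$; and since the $\mu_i$ for distinct $i$ commute, their products yield an arbitrary diagonal map $\mathrm{diag}(\lambda_1,\dots,\lambda_n)$. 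Combining this with the permutation part and the normal form $\rho=P_\sigma D$ finishes the argument.

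The proof is essentially bookkeeping, and the only place demanding care is the conjugation identity $\mu_i=\tau_{1i}\mu\tau_{1i}$: one must track that conjugating the single scaling $\mu$ of ${\mathbf e}_1$ by the transposition exchanging $1$ and $i$ transports the scaling to the $i$-th coordinate while leaving all remaining coordinates fixed. Everything else — the transpositions generating $S_n$, the commuting of the $\mu_i$, and the factorisation $\rho=P_\sigma D$ — is routine.
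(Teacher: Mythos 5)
Your proof is correct and follows essentially the same route as the paper: decompose a monomial map as a permutation part times a diagonal part, generate $S_n$ by the swaps $\tau_{ij}$, and obtain the full diagonal subgroup from the single scaling $\mu$. The only difference is one of detail — the paper simply appeals to the wreath-product structure of $\cM(V)\cong\FF_q^{\times}\wr S_n$, whereas you make explicit the conjugation $\mu_i=\tau_{1i}\mu\tau_{1i}$ that transports the scaling of ${\mathbf e}_1$ to the other coordinates, a step the paper leaves implicit.
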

\begin{proof}
	By construction $\cM(V)$ is generated by the generators
	of the constituent groups of the wreath product $\FF_q^{\times}\wr S_n$.
	As $\FF_q^{\times}$ is cyclic generated by $\alpha$ and
	$S_n$ is generated by the swaps, it follows that the above set
	is enough to generate $\cM(V)$.
\end{proof}

\begin{theorem}
	\label{equiv}
	Take $C\in\cC_t(n,k)$.
	The orbit of $C$ under the action of $\cM(V)$ is contained in
	a connected component of $\Delta_t(n,k)$ for any $t\leq k$.
\end{theorem}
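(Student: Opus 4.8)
The plan is to reduce the statement to a per-generator claim and then propagate connectivity along a word in the generators. First note that a monomial transformation acts on a generator matrix by right multiplication by a matrix $R=PD$ with $P$ a permutation matrix and $D$ a non-singular diagonal matrix (Lemma~\ref{struttura}); hence any $t$ columns of $g(G)$ are non-zero scalar multiples of $t$ columns of $G$ and are therefore linearly independent exactly when the latter are. Thus $g$ preserves the dual-distance condition, so $g(D)\in\cC_t(n,k)$ whenever $D\in\cC_t(n,k)$ and $g\in\cM(V)$, and every code produced below is a legitimate vertex of $\Delta_t(n,k)$. Since ``lying in the same connected component'' is an equivalence relation on the vertex set, it suffices to prove: for every generator $g$ of $\cM(V)$ and every $D\in\cC_t(n,k)$, the codes $D$ and $g(D)$ lie in the same component of $\Delta_t(n,k)$. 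Granting this, if $\rho=g_m\cdots g_1$ is any word in the generators and $C_i:=g_i(C_{i-1})$ with $C_0:=C$, then consecutive terms $C_{i-1},C_i$ lie in one component, so $C$ and $\rho(C)=C_m$ do too; as $\rho$ is arbitrary, the whole orbit lies in a single component.

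For the per-generator claim I will establish the stronger fact that $D$ and $g(D)$ are either equal or adjacent in $\Delta_t(n,k)$, by exhibiting a codimension-one subspace of $D$ fixed by $g$. Write codewords in coordinates with respect to $\fB$. If $g=\tau_{ij}$ is a transposition, set $H:=\{w\in D: w_i=w_j\}$; every $w\in H$ satisfies $\tau_{ij}(w)=w$, so $H\subseteq D\cap\tau_{ij}(D)$, and since $H$ is the kernel in $D$ of the linear functional $w\mapsto w_i-w_j$ we get $\dim H\ge k-1$. If instead $g=\mu$ rescales the first coordinate by $\alpha$, set $H:=\{w\in D: w_1=0\}$; for such $w$ one has $\mu(w)=w$, so again $H\subseteq D\cap\mu(D)$, and $H$ is the kernel of $w\mapsto w_1$, whence $\dim H\ge k-1$.

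It remains to analyse the two extreme cases. If the relevant functional vanishes identically on $D$, that is, columns $i$ and $j$ of a generator matrix coincide (for $\tau_{ij}$) or the first column is zero (for $\mu$), then $g$ fixes $D$ setwise, so $g(D)=D$ and there is nothing to prove. Otherwise $\dim H=k-1$, and either $g(D)=D$, or $g(D)\neq D$, in which case $D\cap g(D)$ is properly contained in $D$, forcing $\dim(D\cap g(D))\le k-1$ and hence, together with $H\subseteq D\cap g(D)$, the equality $\dim(D\cap g(D))=k-1$, i.e.\ $D\sim_\Delta g(D)$. In every case $D$ and $g(D)$ lie in the same component, which completes the argument. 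The only genuinely delicate point, and the step I would check with care, is exactly this final dichotomy: one must verify that the distinguished functional is non-zero on $D$ precisely when $g$ moves $D$, so that the codimension-one subspace $H$ actually coincides with $D\cap g(D)$ and not with $D$ itself, the degenerate cases (a repeated or a zero column) being exactly those for which $g(D)=D$. Note that the value of $t$ enters only through the preservation of $\cC_t(n,k)$ under $\cM(V)$, so the conclusion holds for every $t\le k$.
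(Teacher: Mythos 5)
Your argument is correct, and its skeleton --- reduce to the generators $\tau_{ij}$ and $\mu$ of Lemma~\ref{struttura}, show that each generator carries a code to an equal or adjacent code, then propagate along a word in the generators --- is the same as the paper's. Where you genuinely differ is in how the equal-or-adjacent claim is proved. The paper stacks $G_C$ and $G_{g(C)}$ into a $2k\times n$ matrix and row-reduces to show its rank is at most $k+1$, i.e.\ it bounds $\dim(C+g(C))$ from above; you instead bound $\dim(C\cap g(C))$ from below by exhibiting the subspace $H=\mathrm{Fix}(g)\cap C$ (the kernel in $C$ of $w\mapsto w_i-w_j$, resp.\ $w\mapsto w_1$), which has codimension at most one in $C$, is fixed pointwise by $g$, and hence lies in $C\cap g(C)$. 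The two computations are equivalent via the dimension formula, but yours is arguably cleaner: it isolates the real reason the intersection is large, namely that each monomial generator fixes a hyperplane of $V$ pointwise, so it meets every $k$-subspace in codimension at most one. Two further remarks. First, you explicitly verify that $\cM(V)$ preserves $\cC_t(n,k)$ (columns of $G\cdot PD$ are non-zero scalar multiples of permuted columns of $G$), a point the paper leaves implicit but which is needed for the orbit to consist of vertices of $\Delta_t(n,k)$ at all. Second, the ``delicate point'' you flag at the end is not actually needed: your dichotomy already closes from $H\subseteq D\cap g(D)$ and $\dim H\ge k-1$ alone, since $g(D)\neq D$ forces $\dim(D\cap g(D))\le k-1$; you never have to identify $H$ with $D\cap g(D)$ nor characterize exactly when $g$ fixes $D$.
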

\begin{proof}
	Let $C\in\cC_t(n,k).$ We claim that $C\sim_{\Delta}\rho(C)$, for $\rho\in \cM(V).$
	
	By Lemma~\ref{struttura},
	the group $\cM(V)$ is generated by transpositions $\tau_{ij}$ ($1\leq i<j\leq n$) and the diagonal transformation $\mu$, so it is enough prove the claim above for $\rho=\tau_{ij}$ and $\rho=\mu.$
		
	Suppose $G_C=(P_1,\dots,P_i,\dots,P_n)$ is a given generator matrix of $C$, where
	$P_1,\dots,P_n$ are the columns, hence  vectors of $\FF_{q}^k$. Let $G_{\mu(C)}$ be the generator matrix of the code $\mu(C)$, hence
	$G_{\mu(C)}=G_C\cdot \mathrm{diag}(\alpha,1,1,\dots,1)=(\alpha P_1,\dots, P_i,\dots,P_n)$, since $\mu $ is represented by $\mathrm{diag}(\alpha,1,1,\dots,1)$, with $0\not= \alpha \in \FF_q.$
	
	We study the rank of the matrix $\begin{pmatrix}
			G_C \\ G_{\mu(C)} \end{pmatrix}$. We have
	\begin{multline*}
		\label{told}    \mathrm{rank}
		\begin{pmatrix}
			P_1        & P_2 & \dots & P_i & \dots & P_n \\
			\alpha P_1 & P_2 & \dots & P_i & \dots & P_n \\
		\end{pmatrix}= \\
		\mathrm{rank}
		\begin{pmatrix}
			P_1            & P_2 & \dots & P_i & \dots & P_n \\
			(\alpha-1) P_1 & 0   & \dots & 0 & \dots & 0   \\
		\end{pmatrix}\leq k+1.
	\end{multline*}
	In particular, either
	$\mathrm{rank}\begin{pmatrix}
			G_C \\ G_{\mu(C)}
		\end{pmatrix}=k$ or
	$\mathrm{rank}\begin{pmatrix}
			G_C \\ G_{\mu(C)}
		\end{pmatrix}=k+1$.
	In the former case,
	$C=\mu(C)$ (and there is nothing to prove); in the latter,
	$\dim(C\cap \mu(C))=k-1$ and $C$ and $\mu(C)$
	are adjacent in the graph $\Delta_t(n,k)$.
	
	
	We consider now the action of swaps $\tau_{ij}$, $1\leq i<j\leq n$, on $G_C$. If $P_i=P_j$, then $\tau_{ij}(C)=C$ and there is nothing to
	say.   If $P_i\not= P_j,$ then the code $\tau_{ij}(C)$ has generator matrix
	$G_{\tau_{ij}(C)}=    \begin{pmatrix}
			P_1 & \dots & P_j & \dots & P_i & \dots & P_n \\
		\end{pmatrix}.$
	
	Then
	\begin{multline*}
		\mathrm{rank}\begin{pmatrix}
			G_C \\ G_{\tau_{ij}(C)}
		\end{pmatrix}=
		\mathrm{rank}
		\begin{pmatrix}
			P_1 & \dots & P_i & \dots & P_j & \dots & P_n \\
			P_1 & \dots & P_j & \dots & P_i & \dots & P_n \\
		\end{pmatrix}= \\
		\mathrm{rank}
		\begin{pmatrix}
			P_1 & \dots & P_i     & \dots & P_j     & \dots & P_n \\
			0   & \dots & P_j-P_i & \dots & P_i-P_j & \dots & 0   \\
		\end{pmatrix}= \\
		\mathrm{rank}
		\begin{pmatrix}
			P_1 & \dots & P_i+P_j & \dots & P_j     & \dots & P_n \\
			0   & \dots & 0       & \dots & P_i-P_j & \dots & 0   \\
		\end{pmatrix}\leq k+1.
	\end{multline*}
	Proceeding as before, we have that either $C=\tau_{ij}(C)$ or $C\sim_{\Delta} \tau_{ij}(C).$
      \end{proof}
      It has been pointed out to the authors that
      Proposition 2 of~\cite{Pu} is equivalent to Theorem~\ref{equiv}. 
\section{General results on $\cC_t(n,k)$ and the graph $\Lambda_t(n,k)$}\label{general properties}
Let $C$ be a $[n,k]$-linear code and $G_C= (P_1,P_2,\dots, P_n)$  a given generator matrix of $C$, where $P_1,\dots, P_n$ denote column vectors in $\FF_q^k$.
\begin{lemma}\label{subcodes}
	All $[n,k-1]$-subcodes of $C$ can be represented by generator matrices $G_C^{\varphi}:=(\varphi(P_1), \varphi(P_2), \dots \varphi(P_n)),$ as $\varphi$ varies in all possible ways in the set $\mathcal{M}$
	 of all surjective linear functions from $\FF_q^k$ to $\FF_q^{k-1}.$
\end{lemma}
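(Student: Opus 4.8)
The plan is to reduce the statement to the elementary observation that applying a surjective map $\varphi\colon\FF_q^k\to\FF_q^{k-1}$ to the columns of $G_C$ is the same as multiplying $G_C$ on the left by a $(k-1)\times k$ matrix of rank $k-1$, and that such left multiplications produce exactly the $[n,k-1]$-subcodes of $C$. First I would fix the parametrisation of $C$ by its messages: since $G_C$ has full row rank $k$, the map $\Phi\colon\FF_q^k\to C$ sending a row vector $m$ to the codeword $mG_C$ is a linear isomorphism. Consequently the $(k-1)$-dimensional subcodes of $C$ are in bijection, via $W\mapsto\Phi(W)$, with the $(k-1)$-dimensional subspaces $W$ of $\FF_q^k$.

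Next I would record the key identity. If $\varphi\in\mathcal{M}$ is represented, acting on column vectors, by the $(k-1)\times k$ matrix $M$, then $\varphi(P_j)=MP_j$ for every column, so that
\[ G_C^{\varphi}=(MP_1,\dots,MP_n)=M(P_1,\dots,P_n)=MG_C. \]
Thus $G_C^{\varphi}=MG_C$, and the surjectivity of $\varphi$ is equivalent to $\rk M=k-1$, i.e. to the rows $m_1,\dots,m_{k-1}$ of $M$ being linearly independent.

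With this in hand both inclusions are short. Given a surjective $\varphi$, the rows of $G_C^{\varphi}=MG_C$ are the codewords $m_iG_C=\Phi(m_i)$; since $\Phi$ is injective and the $m_i$ are independent, these are independent, so $G_C^{\varphi}$ has rank $k-1$ and generates the $(k-1)$-subcode $\Phi(W)$ with $W=\langle m_1,\dots,m_{k-1}\rangle$. Conversely, given a $[n,k-1]$-subcode $D$, I would set $W:=\Phi^{-1}(D)$, take the rows of a rank-$(k-1)$ matrix $M$ to be a basis of $W$, and let $\varphi$ be the surjection it represents; then the rows of $G_C^{\varphi}=MG_C$ form a basis of $\Phi(W)=D$, so $G_C^{\varphi}$ is a generator matrix of $D$.

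The argument is essentially routine linear algebra, and the only step needing care is the bookkeeping linking surjectivity of $\varphi$ to the dimension of the resulting code: one must check that $G_C^{\varphi}$ really generates a $(k-1)$-dimensional subcode and not a smaller one. This is precisely where both hypotheses are used, namely that $\varphi$ is surjective (equivalently $\rk M=k-1$) and that $G_C$ has full row rank $k$, so that $\Phi$ is injective and $G_C^{\varphi}$ attains full row rank $k-1$.
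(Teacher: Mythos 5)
Your proof is correct and follows essentially the same route as the paper's: both hinge on the identity $G_C^{\varphi}=FG_C$ for a rank-$(k-1)$ matrix $F$ representing $\varphi$, and both directions are obtained by matching such matrices with generator matrices of $(k-1)$-subcodes. Your explicit parametrisation via $\Phi(m)=mG_C$ just makes the rank bookkeeping (which the paper dismisses with ``clearly'') a little more transparent.
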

\begin{proof}
	Let $D$ be an $[n,k-1]$-subcode of $C$. Then a generator matrix $G_D$
	for $D$ has, as rows, $k-1$ linearly independent vectors of $\FF_q^n$,
	and each of these vectors is a linear combination of the rows of $G_C$.
	So, there exists a $(k-1)\times k$ matrix $F$ of rank $k-1$ such that
	$G_D=FG_C$. If $\varphi$ is the linear function $\FF_q^k\to\FF_q^{k-1}$
	with matrix $F$ with respect to the canonical bases of
	$\FF_q^k$ and $\FF_q^{k-1}$, then
	$\varphi$ is surjective. Define $G_C^{\varphi}:=
		(\varphi(P_1),\dots,\varphi(P_n))$ as the matrix having as columns the images under $\varphi$ of the columns $P_1,\dots, P_n.$
	Then $G_C^{\varphi}=G_D.$
	Conversely, if $\varphi:\FF_q^k\to\FF_q^{k-1}$ is a surjective linear function,
	then put $G^{\varphi}_C:=FG_C$ for $F$ a $(k-1)\times k$ matrix (of rank $k-1$) representing $\varphi$.
	Clearly, $\mathrm{rank}(G_C^{\varphi})=k-1$ and all rows of $G_C^{\varphi}$
	are linear combinations of rows of $G_C$; so
	$G_C^{\varphi}$ is the generator matrix of a subcode of $C$.
\end{proof}
\begin{remark}
	Any map $\varphi$ of the previous lemma can be regarded as a function
	which sends all $k$-dimensional subspaces $C$ of $V$ into $(k-1)$-dimensional
	subspaces $D$ with $D\subset C$. Its explicit action depends on
	the choice of a basis for each $k$-dimensional subspace $C$ of $V$, i.e.
	on the choice of a generator matrix $G_C$ for $C$.
	In particular, $\varphi$ can be regarded as a map $\FF_q^{k,n}\to\FF_q^{k-1,n}$
	sending a $k\times n$ matrix $G$ into a $(k-1)\times n$ matrix
	$G'$, $G\mapsto G'=FG,$ where $F$ is the representative matrix of
	$\varphi$ (with respect to given bases).
	In any case, by Lemma~\ref{subcodes},
	the set of all subcodes of $C$ does not  depend on the specific choice of $G_C$ for $C$ since   $\varphi$
	varies among all surjective linear functions $\FF_q^k\to\FF_q^{k-1}$.
\end{remark}

\begin{lemma}
	\label{e2ll}
	Let $C\in\cC_t(n,k)\setminus\cI_t(n,k)$ and $C'$ be equivalent $[n,k]$-codes. Then there exist equivalent $[n,k-1]$-subcodes $D, D'\in\cC_t(n,k-1)$ with $D\subset C$ and $D'\subset C'$ respectively.
        In particular,
        $C'\in\cC_t(n,k)\setminus\cI_t(n,k)$.
\end{lemma}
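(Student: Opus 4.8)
The plan is to transport the witnessing subcode of $C$ through the equivalence. Since $C$ and $C'$ are equivalent, by Definition~\ref{def equiv} there is a monomial transformation $\rho\in\cM(V)$ with $C'=\rho(C)$. Because $C$ is not isolated, by Definition~\ref{isolated code} it admits a proper subcode $D\in\cC_t(n,k-1)$ with $D\subset C$; the inclusion is automatically proper since $\dim D=k-1<k=\dim C$. I would then simply set $D':=\rho(D)$ and show that this is the required subcode of $C'$.

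First I would record the two formal facts. As $\rho$ is linear and $D\subset C$, applying $\rho$ gives $D'=\rho(D)\subset\rho(C)=C'$; since $\rho$ is a bijection, $\dim D'=\dim D=k-1$, so $D'$ is a genuine $(k-1)$-dimensional subcode of $C'$. Moreover $D$ and $D'=\rho(D)$ are equivalent, again directly by Definition~\ref{def equiv}, with the very same $\rho$ realizing the equivalence. It therefore remains only to verify that $D'\in\cC_t(n,k-1)$.

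This is the key step, and I would handle it through the column description of the dual-distance condition together with the structure of $\rho$. If $G_D$ is a generator matrix of $D$, then a generator matrix of $D'=\rho(D)$ is $G_{D'}=G_D\cdot R$, where $R=P\Delta$ is the product of a permutation matrix $P$ and a non-singular diagonal matrix $\Delta$ (as recalled after Lemma~\ref{struttura}). Hence right-multiplication by $R$ merely permutes the columns of $G_D$ and rescales each of them by a non-zero scalar. Since $D\in\cC_t(n,k-1)$ means that every set of $t$ columns of $G_D$ is linearly independent, and since reordering columns and multiplying them by non-zero scalars clearly preserves the linear independence of every $t$-subset, the same property holds for $G_{D'}$. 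Thus $D'\in\cC_t(n,k-1)$.

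Finally I would assemble the conclusion. As $C'$ is a $k$-dimensional subspace containing the subcode $D'\in\cC_t(n,k-1)$, the observation made in the Introduction (a $k$-space containing a member of $\cC_t(n,k')$ with $k'<k$ already lies in $\cC_t(n,k)$) yields $C'\in\cC_t(n,k)$; and since $D'$ is a proper subcode of $C'$ in $\cC_t(n,k-1)$, the code $C'$ is not isolated, i.e.\ $C'\in\cC_t(n,k)\setminus\cI_t(n,k)$. The only point requiring genuine verification is the invariance of the dual-distance condition under $\rho$, but because monomial transformations act on generator matrices by permuting and scaling columns, this invariance is immediate; so I do not expect any real obstacle, and everything else in the argument is purely formal.
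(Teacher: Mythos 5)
Your proof is correct and follows the same core strategy as the paper: take a witnessing subcode $D\in\cC_t(n,k-1)$ of $C$ and transport it to $D':=\rho(D)\subset C'$ via the monomial transformation $\rho$. The only difference is in the verification: the paper establishes the identity $G_{\rho(C)}^{\varphi}=\rho(G_C^{\varphi})$ by checking it on the generators $\tau_{ij}$ and $\mu$ of $\cM(V)$, whereas you argue directly from the linearity and bijectivity of $\rho$ together with the observation that permuting and rescaling columns preserves the linear independence of every $t$-subset --- a somewhat more streamlined route to the same conclusion.
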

\begin{proof}
  Since $C$ and $C'$
  are equivalent $[n,k]$-codes, then $C'\in\cC_t(n,k)$, and
  there exists $\rho\in \cM(V)$ such that $\rho(C)=C'$.
	Take $D\in\cC_t(n,k-1)$ as a subcode of $C$ ($D$ exists because $C$ is not isolated) and
	let $G_C$  be a generator matrix for $C$.
	As seen in Section~\ref{Sec2}, $G_{\rho(C)} (=\rho(G_{C}))$ is a generator matrix of $C'.$
	
	Denote by $\varphi:\FF_q^k\to\FF_q^{k-1}$ the surjective map associated to $D$ (see Lemma~\ref{subcodes}), so
	$G_C^{\varphi}$ is a generator matrix of $D$.
	We claim that $D$ is equivalent to the subcode of $\rho(C)$ determined by $\varphi$, i.e.
	\begin{equation}
		\label{eeq}
		G_{\rho(C)}^{\varphi}=\rho(G_C^{\varphi}).
	\end{equation}
	By Lemma~\ref{struttura}, it is enough
	to prove Claim~(\ref{eeq}) for $\rho=\tau_{ij}$ and $\rho=\mu.$
	
	Write $G_C=(P_1,\dots,P_i,\dots,P_j,\dots,P_n)$.
	If $\tau_{ij}$ acts as a transposition on the columns $P_i$ and $P_j$ of $G_C$, then
	\begin{multline*}  \tau_{ij}(G_C^{\varphi})=( \tau_{ij}\circ\varphi)(G_C)=
		(\tau_{ij}\circ\varphi)((P_1,\dots,P_i,\dots,P_j,\dots P_n))=\\
		\tau_{ij}(\varphi(P_1),\dots,\varphi(P_i),\dots,\varphi(P_j),\dots,\varphi(P_n))=\\
		(\varphi(P_1),\dots,\varphi(P_j),\dots,\varphi(P_i),\dots,\varphi(P_n))=   \varphi((P_1,\dots,P_j,\dots,P_i,\dots,P_n))=\\
		\varphi(\tau_{ij}(P_1,\dots,P_i,\dots,P_j,
			\dots P_n))=  \varphi(G_{\tau_{ij}(C)})=  G_{\tau_{ij}(C)}^{\varphi}.
	\end{multline*}
	
	Likewise, if $\mu$ acts by multiplying the column $P_1$ by a primitive
	element $\alpha$ of $\FF_q^{\times}$, then
	\begin{multline*}
		\mu(G_C^{\varphi})=( \mu\circ\varphi)(G_C)=
		\mu(\varphi(P_1),\varphi(P_2),\dots,\varphi(P_n))= \\
		(\alpha\varphi(P_1),\varphi(P_2),\dots,\varphi(P_n))=
		(\varphi(\alpha P_1),\varphi(P_2)\dots,\varphi(P_n))= \\
		\varphi((\alpha P_1,P_2,\dots,P_n))=
		\varphi(\mu(P_1,P_2,\dots,P_n))= \varphi (G_{\mu(C)})=G_{\mu(C)}^{\varphi}.
	\end{multline*}
	Thus, the Claim~(\ref{eeq}) is proved. In particular,
        $D'=\rho(D)\in\cC_t(n,k-1)$ and $D'\subseteq C'$ so
        $C'\in\cC_t(n,k)\setminus\cI_t(n,k)$.
\end{proof}

\begin{lemma}
	\label{cc}
	Let  $C,C'\in\cC_t(n,k)\setminus\cI_t(n,k)$ and  suppose that $D$ and $D'$ are two $[n,k-1]$-linear subcodes of $C$ and $C'$ ,respectively. If  $D,D'$ are  in the same
	connected component of $\Delta_t(n,k-1)$, then $C$ and $C'$ are in the same connected component   of $\Lambda_t(n,k)$.
\end{lemma}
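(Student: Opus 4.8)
The plan is to turn a path joining $D$ to $D'$ in $\Delta_t(n,k-1)$ into a walk joining $C$ to $C'$ in $\Lambda_t(n,k)$, using as ``bridges'' the $k$-dimensional codes obtained by summing consecutive $(k-1)$-subcodes along that path. The whole argument rests on one local observation: if $E\in\cC_t(n,k-1)$ and $Z,Z'$ are two distinct $k$-dimensional subspaces both containing $E$, then $Z,Z'\in\cC_t(n,k)$ (by the remark that a $k$-space containing a member of $\cC_t(n,k-1)$ automatically lies in $\cC_t(n,k)$), and since $E\subseteq Z\cap Z'\subsetneq Z$ forces $\dim(Z\cap Z')=k-1$, we get $Z\cap Z'=E\in\cC_t(n,k-1)$, so $Z\sim_\Lambda Z'$. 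In other words, the family of all $k$-dimensional subspaces through a fixed $E\in\cC_t(n,k-1)$ is a clique in $\Lambda_t(n,k)$; this is the engine of the proof.

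First I would fix a path $D=D_0\sim_\Delta D_1\sim_\Delta\cdots\sim_\Delta D_m=D'$ in $\Delta_t(n,k-1)$, which exists because $D$ and $D'$ lie in the same connected component; here each $D_i\in\cC_t(n,k-1)$ and $\dim(D_i\cap D_{i+1})=k-2$. For $0\le i\le m-1$ I set $Z_i:=D_i+D_{i+1}$. A dimension count gives $\dim Z_i=(k-1)+(k-1)-(k-2)=k$, and since $D_i\subset Z_i$ we have $Z_i\in\cC_t(n,k)$. By construction $Z_i$ contains both $D_i$ and $D_{i+1}$, so it lies simultaneously in the clique through $D_i$ and in the clique through $D_{i+1}$.

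Next I would assemble the sequence
\[ C,\;Z_0,\;Z_1,\;\dots,\;Z_{m-1},\;C'. \]
Consecutive terms always share a common member of $\cC_t(n,k-1)$: $C$ and $Z_0$ both contain $D_0=D$; each pair $Z_i,Z_{i+1}$ both contains $D_{i+1}$; and $Z_{m-1}$ and $C'$ both contain $D_m=D'$. By the local observation, each consecutive pair is therefore either equal or adjacent in $\Lambda_t(n,k)$, so the sequence is a walk witnessing that $C$ and $C'$ belong to the same connected component of $\Lambda_t(n,k)$, which is exactly the claim. The degenerate case $m=0$ (that is, $D=D'$) is immediate, since then $C$ and $C'$ both contain $D$ and are thus equal or $\Lambda$-adjacent.

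The argument is essentially combinatorial once the clique property is established, so I do not expect a serious obstacle; the one point that needs care is to check that every bridge $Z_i$ genuinely lands in $\cC_t(n,k)$ and that two distinct consecutive bridges meet \emph{exactly} in the prescribed $(k-1)$-code rather than in something larger. Both follow from the cited remark together with the dimension identity $\dim(D_i+D_{i+1})=k$, so the proof reduces to verifying these facts and reading off the walk above.
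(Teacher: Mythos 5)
Your proposal is correct and follows essentially the same route as the paper: both build the bridges $Z_i=\langle D_i,D_{i+1}\rangle$ (the paper's $E_i$), note they lie in $\cC_t(n,k)$ because they contain a code of $\cC_t(n,k-1)$, and link $C$ to the first bridge and $C'$ to the last via $D$ and $D'$. Your explicit ``clique through a fixed $E$'' observation and the handling of the degenerate/equality cases are just a slightly more careful packaging of the same argument.
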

\begin{proof}
	By hypothesis, there exists a path $D_0=D\sim_{\Delta} D_1\sim_{\Delta}\dots\sim_{\Delta}
		D_w=D'$ in
	$\Delta_t(n,k-1)$ from $D$ to $D'$.
	For any $i=0,\dots,w-1$ we
	have $\dim(D_i\cap D_{i+1})=k-2$ and
        $\dim(\langle D_i,D_{i+1}\rangle)=k.$
	Consider the codes $E_i:=\langle D_{i},D_{i+1}\rangle\in\cC_t(n,k)$.
	Since $E_i\cap E_{i+1}=D_{i+1}\in\cC_t(n,k-1)$
	for all $i$, they are the vertices
	of a path $E_0\sim_{\Lambda} E_1\sim_{\Lambda}\dots\sim_{\Lambda} E_{w-1}$ in $\Lambda_t(n,k)$.
	Furthermore, $C$ is adjacent to
	$E_0$ in $\Lambda_t(n,k)$ since $D\subseteq C\cap E_0$ and
	likewise $C'$ is adjacent to $E_{w-1}$. It follows that $C$ and
	$C'$ are in the same connected component of $\Lambda_t(n,k)$.
\end{proof}

Recall that
$\widetilde{\Lambda}_t(n,k)$ is the graph induced by $\Lambda_t(n,k)$
on the codes in $\cC_t(n,k)\setminus\cI_t(n,k).$
The following is immediate.
\begin{corollary}
	\label{connected}
	Suppose that the graph $\Delta_t(n,k-1)$ is connected.
	Then the following conditions hold true:
	\begin{enumerate}
		\item All elements of $\cC_t(n,k)\setminus\cI_t(n,k)$
		      belong to the same connected component $\widetilde{\Lambda}_t(n,k)$
		      of $\Lambda_t(n,k)$. In particular, if
		      the graph $\Delta_t(n,k-1)$ is connected, then $\widetilde{\Lambda}_t(n,k)$ is connected.
		\item
		      Any two codes in $\cC_t(n,k)\setminus\cI_t(n,k)$
		      are at distance at most
		      $\mathrm{diam}(\Delta_t(n,k-1))+1$.
	\end{enumerate}
\end{corollary}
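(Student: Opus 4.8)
The plan is to obtain both statements directly from Lemma~\ref{cc}, exploiting the fact that connectedness of $\Delta_t(n,k-1)$ makes the hypothesis of that lemma automatic: \emph{any} two codes of $\cC_t(n,k-1)$ then lie in the same (unique) connected component of $\Delta_t(n,k-1)$, so the condition ``$D,D'$ in the same connected component'' is free.

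For Point 1 I would fix two arbitrary codes $C,C'\in\cC_t(n,k)\setminus\cI_t(n,k)$. Since neither is isolated, Definition~\ref{isolated code} supplies subcodes $D,D'\in\cC_t(n,k-1)$ with $D\subset C$ and $D'\subset C'$. As $\Delta_t(n,k-1)$ is connected, $D$ and $D'$ trivially belong to its single connected component, so Lemma~\ref{cc} applies and puts $C$ and $C'$ in the same connected component of $\Lambda_t(n,k)$. As $C,C'$ were arbitrary, all of $\cC_t(n,k)\setminus\cI_t(n,k)$ lies in a single connected component of $\Lambda_t(n,k)$. Since the isolated codes are precisely the degree-zero vertices of $\Lambda_t(n,k)$, no path between non-isolated codes can pass through $\cI_t(n,k)$; hence this component coincides with the induced subgraph $\widetilde{\Lambda}_t(n,k)$, which is therefore connected.

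For Point 2 I would read off the length of the path already built in the proof of Lemma~\ref{cc}. Choosing $D,D'$ so as to realise a geodesic in $\Delta_t(n,k-1)$, the path $D=D_0\sim_\Delta\cdots\sim_\Delta D_w=D'$ has length $w=d_\Delta(D,D')\leq\mathrm{diam}(\Delta_t(n,k-1))$. The construction then produces the $\Lambda$-path $C\sim_\Lambda E_0\sim_\Lambda\cdots\sim_\Lambda E_{w-1}\sim_\Lambda C'$, obtained by prepending the edge $C\sim_\Lambda E_0$ and appending $E_{w-1}\sim_\Lambda C'$ to a path of length $w-1$ through the codes $E_i=\langle D_i,D_{i+1}\rangle$; its total length is at most $w+1$. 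Therefore $d_\Lambda(C,C')\leq w+1\leq\mathrm{diam}(\Delta_t(n,k-1))+1$.

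Since all the substantive work, namely converting a $\Delta$-path into a connecting $\Lambda$-path, is already contained in Lemma~\ref{cc}, I expect no genuine obstacle here: the corollary is truly immediate. The only point deserving a moment's care is the degenerate case $w=0$ (that is, $D=D'$), in which the vertices $E_i$ do not occur; here $D\subseteq C\cap C'$ with $\dim D=k-1$ forces $C\cap C'=D\in\cC_t(n,k-1)$ whenever $C\neq C'$, so $C\sim_\Lambda C'$ and the bound still reads $d_\Lambda(C,C')\leq 1\leq\mathrm{diam}(\Delta_t(n,k-1))+1$. One should likewise note that any coincidences among $C,E_0,\dots,E_{w-1},C'$ can only shorten the path, so the stated upper bound is never violated.
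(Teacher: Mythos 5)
Your proof is correct and follows exactly the route the paper intends: the paper derives Corollary~\ref{connected} as an immediate consequence of Lemma~\ref{cc} (the connectedness hypothesis making the lemma's condition automatic, and the path $C\sim_\Lambda E_0\sim_\Lambda\cdots\sim_\Lambda E_{w-1}\sim_\Lambda C'$ giving the bound $w+1$). Your additional attention to the degenerate case $D=D'$ and to possible coincidences among the $E_i$ is careful but does not change the argument.
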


\begin{corollary}
	Suppose $C,C'\in\cC_t(n,k)\setminus\cI_t(n,k)$ are equivalent codes.
	Then $C$ and $C'$ belong to the same connected component of
	$\Lambda_t(n,k)$.
\end{corollary}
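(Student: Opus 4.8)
The plan is to chain together the three preceding results, namely Lemma~\ref{e2ll}, Theorem~\ref{equiv}, and Lemma~\ref{cc}. The guiding idea is that equivalence of codes descends to suitably chosen subcodes of dimension $k-1$, that equivalent $[n,k-1]$-subcodes lie in a common connected component of $\Delta_t(n,k-1)$, and that membership in a common component of $\Delta_t(n,k-1)$ forces $C$ and $C'$ into a common component of $\Lambda_t(n,k)$.

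First I would apply Lemma~\ref{e2ll}: since $C\in\cC_t(n,k)\setminus\cI_t(n,k)$ and $C'$ is equivalent to $C$, there exist equivalent $[n,k-1]$-subcodes $D\subset C$ and $D'\subset C'$ with $D,D'\in\cC_t(n,k-1)$, and moreover $C'\in\cC_t(n,k)\setminus\cI_t(n,k)$. In particular $\cC_t(n,k-1)\neq\emptyset$, so $t\leq k-1$ and Theorem~\ref{equiv} is applicable to the graph $\Delta_t(n,k-1)$. Next, because $D$ and $D'$ are equivalent there is a monomial transformation $\rho\in\cM(V)$ with $\rho(D)=D'$, so $D'$ lies in the $\cM(V)$-orbit of $D$; by Theorem~\ref{equiv} this orbit is contained in a single connected component of $\Delta_t(n,k-1)$, whence $D$ and $D'$ belong to the same connected component of $\Delta_t(n,k-1)$.

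Finally I would invoke Lemma~\ref{cc} with these $D\subset C$ and $D'\subset C'$: since both $C$ and $C'$ are non-isolated and $D,D'$ lie in the same component of $\Delta_t(n,k-1)$, the lemma yields directly that $C$ and $C'$ belong to the same connected component of $\Lambda_t(n,k)$. There is no genuine obstacle here, as the statement is a clean corollary of the machinery already established; the only points meriting a moment of care are verifying that $t\leq k-1$ (so that Theorem~\ref{equiv} may be used in dimension $k-1$) and that $C'$ is itself non-isolated (so that Lemma~\ref{cc} applies), both of which are furnished by the conclusion of Lemma~\ref{e2ll}.
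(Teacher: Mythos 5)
Your proof is correct and follows exactly the paper's own argument: Lemma~\ref{e2ll} to obtain equivalent subcodes $D,D'\in\cC_t(n,k-1)$, Theorem~\ref{equiv} to place them in a common component of $\Delta_t(n,k-1)$, and Lemma~\ref{cc} to conclude. The extra checks you mention (that $C'$ is non-isolated and that $\cC_t(n,k-1)\neq\emptyset$) are sensible but do not change the route.
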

\begin{proof}
	By Lemma~\ref{e2ll}, there are equivalent subcodes $D,D'$ of $C$ and $C'$
	(respectively) with $D,D'\in\cC_t(n,k-1)$.
	By Theorem~\ref{equiv}, $D$ and $D'$ belong to the same connected
	component of $\Delta_t(n,k-1)$.
	The thesis now follows directly from Lemma~\ref{cc}.
\end{proof}

In Lemma~\ref{l:nc} we shall make use of the following remark.
\begin{remark}
	\label{r:above}
	For any $D\in\cC_t(n,k-1)$ and for any $[n,k]$-linear code $C$ with $D\subset C$ we have
	$C\in\cC_t(n,k)$; consequently, for any element $D\in\cC_t(n,k-1)$
	with $k-1<n$ there exists at least one $C\in\cC_t(n,k)$ with $D\subset C$.
\end{remark}

\begin{lemma}
	\label{l:nc}
	Suppose $\Delta_{t}(n,k-1)$ not to be connected.
	Then the graph $\widetilde{\Lambda}_t(n,k)$ is not connected.
\end{lemma}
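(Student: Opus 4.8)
The plan is to construct a well-defined map $\pi$ from the vertex set of $\widetilde{\Lambda}_t(n,k)$ to the set of connected components of $\Delta_t(n,k-1)$, and then to show that $\pi$ is constant on each connected component of $\widetilde{\Lambda}_t(n,k)$ while being surjective onto the components of $\Delta_t(n,k-1)$. Since $\Delta_t(n,k-1)$ is assumed disconnected, it has at least two components, so $\pi$ takes at least two values; as $\pi$ is constant on components of $\widetilde{\Lambda}_t(n,k)$, any two vertices with distinct $\pi$-values lie in distinct components, forcing $\widetilde{\Lambda}_t(n,k)$ to be disconnected.

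To define $\pi$, first I would observe that, for a fixed $C\in\cC_t(n,k)\setminus\cI_t(n,k)$, all of its $[n,k-1]$-subcodes lying in $\cC_t(n,k-1)$ fall into a single connected component of $\Delta_t(n,k-1)$. Indeed, if $D,D'$ are two distinct such subcodes, then both are hyperplanes of the $k$-dimensional space $C$, so $\dim(D+D')=k$ and hence $\dim(D\cap D')=k-2$; thus $D\sim_\Delta D'$, and in fact the collection of all such subcodes forms a clique in $\Delta_t(n,k-1)$. Since $C$ is non-isolated, this collection is nonempty by Definition~\ref{isolated code}, so I may set $\pi(C)$ to be the common component containing all of them.

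Next I would verify the two required properties. For constancy along edges: if $C\sim_\Lambda C'$, then by definition $C\cap C'\in\cC_t(n,k-1)$ is simultaneously a $(k-1)$-subcode of $C$ and of $C'$, whence it lies in $\pi(C)$ and in $\pi(C')$, forcing $\pi(C)=\pi(C')$; propagating this along paths shows $\pi$ is constant on each component of $\widetilde{\Lambda}_t(n,k)$. For surjectivity: given any $D\in\cC_t(n,k-1)$, Remark~\ref{r:above} (using $k-1<n$) supplies a $C\in\cC_t(n,k)$ with $D\subset C$; such $C$ is non-isolated, hence a vertex of $\widetilde{\Lambda}_t(n,k)$, and $\pi(C)$ is precisely the component of $D$. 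Thus every component of $\Delta_t(n,k-1)$ is attained.

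Putting these together yields the claim. I expect the only delicate point to be the well-definedness of $\pi$, namely the clique observation that distinct $(k-1)$-subcodes of a fixed $k$-dimensional $C$ automatically meet in dimension $k-2$ and are therefore adjacent in $\Delta_t(n,k-1)$; everything else is bookkeeping about components and the existence guaranteed by Remark~\ref{r:above}. It is worth emphasizing that adjacency in $\Delta_t$ requires only that both endpoints lie in $\cC_t$ and meet in codimension one, with no condition imposed on the intersection itself, which is exactly what makes the clique argument go through.
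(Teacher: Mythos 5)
Your proof is correct and rests on exactly the same key observation as the paper's: any two distinct $[n,k-1]$-subcodes of a fixed $k$-dimensional code are hyperplanes of it, hence meet in dimension $k-2$ and are adjacent in $\Delta_t(n,k-1)$, while Remark~\ref{r:above} supplies the codes $C\supset D$ needed to reach every component. The paper packages this as a proof by contradiction, tracing a hypothetical $\widetilde{\Lambda}_t(n,k)$-path $C_0\sim_{\Lambda}\dots\sim_{\Lambda} C_w$ and extracting from the intersections $C_i\cap C_{i+1}$ a $\Delta_t(n,k-1)$-path joining two subcodes chosen in distinct components; your component-valued invariant $\pi$ is a cleaner formulation of the same argument (constancy of $\pi$ along edges is precisely the step the paper performs inside the contradiction), so no comparison beyond this is needed.
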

\begin{proof}
	Take two codes $D,D'\in\cC_t(n,k-1)$ which belong to different
	connected components of $\Delta_t(n,k-1)$ and
	let $C,C'$ be two
	codes in $\cC_t(n,k)$ having $D$, $D'$ as subcodes respectively.
	These codes exist by Remark~\ref{r:above}. By contradiction, suppose there exists a path $C=C_0\sim_{\Lambda} C_1\sim_{\Lambda}\dots
		\sim_{\Lambda} C_w=C'$ in $\widetilde{\Lambda}_t(n,k)$ joining $C$ and $C'.$ So,
	$D_i=C_i\cap C_{i+1}\in\cC_t(n,k-1)$ for $i=0,\dots,w-1$.
	Since both $D_i$ and $D_{i+1}$ are contained in $C_{i+1},$
	we have either $D_i=D_{i+1}$ or $\dim(D_i\cap D_{i+1})=k-2.$
	In either case, we have a collection of subcodes
	$D=D_{i_0}\sim_{\Delta} D_{i_1}\sim_{\Delta}\dots\sim_{\Delta} D_{i_r}=D'$
	in $\cC_t(n,k-1)$ such that
	$\dim(D_{i_j}\cap D_{i_{j+1}})=k-2$, i.e. a path in
	$\Delta_t(n,k-1)$. This is a contradiction, since we assumed
	$D$ and $D'$ to be in distinct connected components of
	$\Delta_t(n,k-1)$.
\end{proof}

From Corollary~\ref{connected} and Lemma~\ref{l:nc}, we have the following.
\begin{corollary}
	\label{co:con}
	The graph $\widetilde{\Lambda}_t(n,k)$ is connected if and
	only if $\Delta_t(n,k-1)$ is connected.
\end{corollary}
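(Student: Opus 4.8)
The statement to be proved is a biconditional, and the heavy lifting has already been carried out in the two preceding results, so the plan is simply to assemble them into the two implications.

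For the direction "$\Delta_t(n,k-1)$ connected $\Rightarrow\widetilde{\Lambda}_t(n,k)$ connected", I would invoke Corollary~\ref{connected} directly: its Point~1 asserts that whenever $\Delta_t(n,k-1)$ is connected, all elements of $\cC_t(n,k)\setminus\cI_t(n,k)$ lie in a single connected component, which is exactly the statement that the induced graph $\widetilde{\Lambda}_t(n,k)$ is connected. For the converse, the cleanest route is the contrapositive rather than a direct argument. Lemma~\ref{l:nc} gives precisely "$\Delta_t(n,k-1)$ \emph{not} connected $\Rightarrow\widetilde{\Lambda}_t(n,k)$ \emph{not} connected"; contraposing this yields "$\widetilde{\Lambda}_t(n,k)$ connected $\Rightarrow\Delta_t(n,k-1)$ connected", which is the missing implication. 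Combining the two implications gives the equivalence.

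There is essentially no genuine obstacle here, since both directions are already packaged in the cited results; the only point requiring care is bookkeeping around degenerate cases. If $\cC_t(n,k-1)=\emptyset$, then $\Delta_t(n,k-1)$ is the empty graph, which is connected by the convention fixed in Section~\ref{Sec2}, and correspondingly $\cC_t(n,k)\setminus\cI_t(n,k)=\emptyset$ (every code in $\cC_t(n,k)$ is isolated, having no proper subcode in the empty set $\cC_t(n,k-1)$), so $\widetilde{\Lambda}_t(n,k)$ is also empty and hence connected; the equivalence holds trivially. In the nondegenerate range one should also keep in mind the standing hypothesis $k-1<n$ underlying Remark~\ref{r:above}, which is what guarantees in Lemma~\ref{l:nc} that codes $C,C'\in\cC_t(n,k)$ with prescribed subcodes $D,D'$ actually exist; this is already built into the statement of Lemma~\ref{l:nc} and needs no reproving. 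I would therefore present the proof as a two-line consolidation, citing Corollary~\ref{connected} for one direction and the contrapositive of Lemma~\ref{l:nc} for the other.
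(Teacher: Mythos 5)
Your proposal is correct and matches the paper's own argument exactly: the paper derives Corollary~\ref{co:con} precisely by combining Corollary~\ref{connected} for one implication with Lemma~\ref{l:nc} (i.e.\ the contrapositive of the converse) for the other. Your extra remarks on the empty-graph convention are consistent with the paper's stated conventions and do not change the route.
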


\section{Proof of the main theorems} \label{sec3}
As in Lemma~\ref{subcodes}, put  \[\mathcal{M}=\{\varphi \colon \FF_q^k\rightarrow \FF_q^{k-1}\colon \,\,\varphi\text{ is linear and } \mathrm{Im}(\varphi)=\FF_q^{k-1}\} \,\,\text{ and }\]
\[\mathcal{K}=\{\ker(\varphi)\colon \varphi\in \mathcal{M}\}.\] 
Clearly, $\mathcal{K}\cong \PG(k-1,q)$, hence $|\mathcal{K}|=(q^k-1)/(q-1).$
\\

\noindent {\bf Proof of Theorem~\ref{main thm 1}}.
Since $\Lambda_t(n,k)=\widetilde{\Lambda}_t(n,k)\cup\cI_t(n,k)$ and
the elements of $\cI_t(n,k)$ are isolated vertices of
$\Lambda_t(n,k)$,
Theorem~\ref{main thm 1} is a direct consequence of Corollary~\ref{co:con}.\hfill $\square$
\\

\noindent {\bf Proof of Theorem~\ref{main thm 2}}
We have that $C\in \cC_t(n,k)$ is an isolated code if and only if every $[n,k-1]$-subcode of $C$ does not belong to $\cC_t(n,k-1)$.
This is the same as to say that
for every $[n,k-1]$-subcode $D$ of $C$ there exist at least $t$ columns in any of its
generator matrices which are linearly dependent.
By Lemma~\ref{subcodes}, any subcode $D$
has a generator matrix of the form $G^{\varphi}_{C}=(\varphi(P_1), \dots, \varphi(P_n))$, where $G_C=(P_1,\dots, P_n)$ is a given generator matrix for $C$
with columns $P_1,\dots,P_n$
and  $\varphi$ is an element of $\cM$. For $C$ to be isolated
we require that
for every $\varphi \in \cM$ there is at least one set of $t$ columns of $G_C^{\varphi}$  which are linearly dependent.
This is equivalent to saying that there exist $i_1,\dots  i_t\in\{1,\dots, n\}$ such that $\lambda_{i_1}P_{i_1}+\dots \lambda_{i_t}P_{i_t}\in \ker(\varphi)$,
i.e. the projective point given by $\ker(\varphi)(\in \mathcal{K})$ is in the $(t-1)$-dimensional projective space of $\PG(k-1,q)$ spanned by the points $\langle P_{i_1} \rangle,\dots,  \langle P_{i_t} \rangle$.
Since this needs to be true for all $\varphi\in\cM$ we have that the columns
of the generator matrix $G_C$ of $C$ must form a $(t-1)$-saturating set of $\PG(k-1,q)$.
Theorem~\ref{main thm 2} follows. \hfill {$\square$}

\subsection{Non-degenerate codes}\label{non-deg codes}
By Theorem~\ref{main thm 2},
we have that $C\in \cC_1(n,k)$, with generator matrix $G_C=(P_1,\dots, P_n)$, is an isolated code if and only if
for every $\varphi \in \mathcal{M}$ there exists $i\in\{1,\dots, n\}$ such that $\langle P_i\rangle =\ker(\varphi)$, i.e. $\forall v\in \mathcal{K}$ there exists $i\in\{1,\dots, n\}$ such that $\langle P_i\rangle=v$.

The following lemma is a direct consequence of Theorem~\ref{told} and Theorem~\ref{main thm 1}. We also propose an alternative direct proof which holds only for $t=1$.
\begin{lemma}~\label{2 main thm1}
	$\Lambda_1(n,k)=\cI_1(n,k)\cup \widetilde{\Lambda}_1(n,k)$, where $\widetilde{\Lambda}_1(n,k)$ is a
	connected component of $\Lambda_1(n,k)$.
        The diameter of $\widetilde{\Lambda}_1(n,k)$ is at most $k+1.$
\end{lemma}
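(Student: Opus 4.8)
The plan is to give a direct, self-contained argument for $t=1$ that mirrors the structure of Lemma~\ref{cc} but exploits the very concrete description of $\cC_1(n,k-1)$ and its intersection pattern to pin down the diameter bound $k+1$. First I would recall from the discussion preceding this lemma that a code $C\in\cC_1(n,k)$ with generator matrix $G_C=(P_1,\dots,P_n)$ is non-isolated precisely when the columns $P_1,\dots,P_n$, viewed projectively in $\PG(k-1,q)$, do \emph{not} cover all of $\mathcal{K}\cong\PG(k-1,q)$; being non-degenerate (dual distance $\geq 2$) means no column is zero, so each $P_i$ determines a genuine point of $\PG(k-1,q)$. The key elementary fact I would establish is that the graph $\Delta_1(n,k-1)$ is connected with small diameter: two non-degenerate $[n,k-1]$-codes correspond to two spanning multisets of $n$ points in $\PG(k-2,q)$, and one can pass from one to the other by a bounded number of single-column adjustments, each of which changes the subspace intersection dimension by exactly one. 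This is where I would either invoke Theorem~\ref{told}(1) directly or, for the ``alternative direct proof,'' reprove connectivity by an explicit column-swapping argument.

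For the direct proof specific to $t=1$, I would argue as follows. Given $C,C'\in\cC_1(n,k)\setminus\cI_1(n,k)$, pick non-degenerate subcodes $D\subset C$ and $D'\subset C'$ in $\cC_1(n,k-1)$, which exist by non-isolation. The heart of the matter is to connect $D$ to $D'$ inside $\Delta_1(n,k-1)$ by a short path, and then lift that path to $\Lambda_1(n,k)$ via the construction $E_i:=\langle D_i,D_{i+1}\rangle$ exactly as in Lemma~\ref{cc}. The lifting adds exactly two edges at the ends (from $C$ to $E_0$ and from $E_{w-1}$ to $C'$), so the $\Lambda$-distance is at most $\diam(\Delta_1(n,k-1))+1$ by Corollary~\ref{connected}(2). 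Thus the claimed bound $k+1$ reduces to showing $\diam(\Delta_1(n,k-1))\leq k$; I would prove this by a standard basis-exchange argument in $\PG(k-2,q)$, adjusting one coordinate of the generator matrix at a time while maintaining the spanning (non-degeneracy) condition, so that after at most $k$ single-column moves one transforms any spanning configuration into a fixed reference one.

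The main obstacle I anticipate is controlling the diameter bound so that it comes out to exactly $k$ for $\Delta_1(n,k-1)$ rather than something larger, while \emph{simultaneously} guaranteeing that every intermediate code along the path remains non-degenerate (i.e.\ stays in $\cC_1$, not merely in $\Gamma(n,k-1)$). A naive sequence of elementary column operations might temporarily introduce a zero column or collapse the span, leaving $\cC_1(n,k-1)$; the delicate part is to order the modifications so that non-degeneracy is preserved at every step. I would handle this by always performing an exchange move that replaces one column's point by a point lying outside the span of the remaining $k-2$ reference points but inside the current configuration's span, which is possible because both codes have full rank $k-1$; since a basis of $\FF_q^{k-1}$ is reached after at most $k-1$ such exchanges and then one final step fixes the remaining columns simultaneously, the total is at most $k$. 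Combining this with the two-edge lift yields $d_\Lambda(C,C')\leq k+1$, and since this holds for all such $C,C'$, the diameter of $\widetilde{\Lambda}_1(n,k)$ is at most $k+1$, completing the proof that $\widetilde{\Lambda}_1(n,k)$ is a single connected component.
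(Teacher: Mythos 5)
Your reduction of the connectivity statement to Corollary~\ref{connected} and the lifting $E_i=\langle D_i,D_{i+1}\rangle$ is sound, but the proof of the diameter bound has a genuine gap: everything hinges on the claim $\diam(\Delta_1(n,k-1))\leq k$, and the ``standard basis-exchange argument'' you sketch does not establish it. The difficulty is exactly the one you flag yourself and then wave away: a Grassmann geodesic between two non-degenerate $(k-1)$-spaces $D,D'$ need not stay non-degenerate, i.e.\ an intermediate subspace may fall inside some coordinate hyperplane $\{x:x_i=0\}$. This is not a removable nuisance --- it is the reason Theorem~\ref{told}(1) asserts that $\Delta_1$ is isometrically embedded in the Grassmann graph \emph{only} when $n<(q+1)^2+k-2$; for larger $n$ there really are pairs with no non-degenerate geodesic of Grassmann length. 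So your move ``replace one column's point by a point outside the span of the remaining reference points'' must be shown to admit, at every step, a choice keeping all $n$ columns nonzero, and when $n$ is large compared to $q$ this requires a real argument (it is the content of the Kwiatkowski--Pankov distance analysis, which Theorem~\ref{told} as quoted does not supply --- it gives connectivity, not a diameter bound). Also, your count is off as stated: transforming one generator matrix into another column by column takes up to $n$ moves, not $k$, and routing both codes through ``a fixed reference configuration'' would double whatever bound you obtain.

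The paper closes this gap by a different and cheaper device, entirely at the level of $[n,k]$-codes: any code containing the all-one vector $\mathbf{1}$ is automatically non-degenerate, as is any of its hyperplanes through $\mathbf{1}$. Given non-isolated $C_1,C_2$ with non-degenerate subcodes $D_1,D_2$, one sets $C_i'=\langle D_i,\mathbf{1}\rangle$ (or $C_i'=C_i$ if $\mathbf{1}\in C_i$), at cost one edge each, and then joins $C_1'$ to $C_2'$ by a Grassmann geodesic all of whose vertices and consecutive intersections contain $\mathbf{1}$, hence lie in $\cC_1$ and give $\Lambda_1$-edges for free; since $\dim(C_1'\cap C_2')\geq 1$ this geodesic has length at most $k-1$, yielding $k+1$ in total. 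If you want to salvage your route, the same hub trick one level down proves $\diam(\Delta_1(n,k-1))\leq k$ (pass from $D$ to $\langle H,\mathbf{1}\rangle$ for a hyperplane $H$ of $D$, travel among subspaces containing $\mathbf{1}$, then step to $D'$); but some such argument must be supplied --- it cannot be dismissed as routine.
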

\begin{proof}
	It is enough to show that if a code is not isolated, then it belongs
	to the   same connected component as any other non-isolated code.
	First observe that a code which contains the all-$1$ vector $\mathbf{1}$,
	is not isolated.
	Let now $C_1$ and $C_2$ be two non-isolated codes and suppose that
	$D_1$ and $D_2$ are two non-degenerate $[n,k-1]$-subcodes of
	$C_1$ and $C_2$, respectively.
	If $\mathbf{1}\in C_i$, then put $C_i':=C_i$ for $i=1,2$.
	If $\mathbf{1}\not\in C_i$, then put $C_i'=\langle D_i,\mathbf{1}\rangle$.
	Observe that $d(C_i,C_i')\leq1$ in $\Lambda_1(n,k)$.
	Since $\Delta_1(n,k-1)$ is connected (Theorem~\ref{told}) we can construct a path in $\Lambda_1(n,k)$ from
	$C_1'$ to $C_2'$ of codes all
	containing the vector $\mathbf{1}.$
	So, there is a path from $C_1$ to $C_1'$ and then from $C_1'$ to $C_2'$ and
	finally to $C_2'$ to $C_2$, i.e. $C_1$ and $C_2$ belong to the same connected
	component.
	Furthermore, observe that $d(C_1,C_2)\leq d(C_1,C_1')+d(C_1',C_2')+d(C_2',C_2)
		\leq 2+d(C_1',C_2')\leq 2+k-1=k+1$.
\end{proof}

\begin{lemma}\label{1 main thm1}
	$\Lambda_1(n,k)$ is connected if and only if $n< (q^k-1)/(q-1).$ 
\end{lemma}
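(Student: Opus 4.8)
The plan is to reduce the connectivity of $\Lambda_1(n,k)$ to the (non)existence of isolated vertices and then to count points of a projective space. By Lemma~\ref{2 main thm1} we already know that $\Lambda_1(n,k)=\cI_1(n,k)\cup\widetilde{\Lambda}_1(n,k)$ with $\widetilde{\Lambda}_1(n,k)$ connected, so the whole graph is connected precisely when $\cI_1(n,k)=\emptyset$ (together with the observation that, under the standing hypothesis $0<k<n$, the graph never reduces to a single vertex, so a nonempty $\cI_1(n,k)$ really does split off a separate component). Everything therefore comes down to determining for which $n$ the set $\cI_1(n,k)$ is empty.

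First I would recall the characterization of isolated codes in the non-degenerate case derived just before the lemma (equivalently, Point~3 of Corollary~\ref{t=1}): a code $C\in\cC_1(n,k)$ with generator matrix $G_C=(P_1,\dots,P_n)$ is isolated if and only if, for every $v\in\mathcal{K}$, some column $P_i$ satisfies $\langle P_i\rangle=v$; that is, the columns of $G_C$ are vector representatives of all points of $\PG(k-1,q)$. Since $|\mathcal{K}|=(q^k-1)/(q-1)$ is exactly the number of points of $\PG(k-1,q)$, an isolated code must have at least $(q^k-1)/(q-1)$ columns. This gives one implication at once: if $n<(q^k-1)/(q-1)$ then $\cI_1(n,k)=\emptyset$, hence $\Lambda_1(n,k)=\widetilde{\Lambda}_1(n,k)$ is connected.

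For the converse I would construct an isolated code whenever $n\ge(q^k-1)/(q-1)$: take a $k\times n$ matrix whose first $(q^k-1)/(q-1)$ columns run through one representative of each point of $\PG(k-1,q)$ and whose remaining columns are arbitrary nonzero vectors. These columns span $\FF_q^k$ and none is zero, so the matrix generates a code in $\cC_1(n,k)$, and by the characterization it is isolated; hence $\cI_1(n,k)\ne\emptyset$. To conclude that $\Lambda_1(n,k)$ is genuinely disconnected I must also exhibit a second component. For $k\ge2$ the matrix with columns ${\mathbf e}_1,\dots,{\mathbf e}_k$ followed by repetitions of ${\mathbf e}_1$ generates a non-degenerate code whose columns cover only $k<(q^k-1)/(q-1)$ points, so it lies in $\widetilde{\Lambda}_1(n,k)$, which is therefore nonempty; for $k=1$ one has $(q^k-1)/(q-1)=1$, and $n>k=1$ forces at least two distinct $[n,1]$-codes, each necessarily isolated since $\cC_1(n,0)=\emptyset$. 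Either way $\Lambda_1(n,k)$ has at least two connected components.

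The only delicate point is the bookkeeping at the boundary and in the small-$k$ cases: I must ensure that $\widetilde{\Lambda}_1(n,k)$ is nonempty (or that $\cI_1(n,k)$ contributes at least two vertices) so that the existence of an isolated vertex forces disconnectedness rather than a one-vertex, vacuously connected graph. The standing hypothesis $0<k<n$, together with the elementary inequality $(q^k-1)/(q-1)\ge k$ (strict for $k\ge2$), is exactly what makes this go through. Everything else is the straightforward comparison of $n$ with $(q^k-1)/(q-1)$, which is immediate once the characterization of isolated codes is in hand.
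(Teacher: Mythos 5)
Your proof is correct and follows essentially the same route as the paper: both reduce connectivity to the emptiness of $\cI_1(n,k)$ via Lemma~\ref{2 main thm1} and then compare $n$ with the $(q^k-1)/(q-1)$ points of $\PG(k-1,q)$ that the columns of an isolated code must represent, exhibiting such a code when $n$ is at least that bound. Your extra bookkeeping ruling out a one-vertex (vacuously connected) graph is a refinement the paper omits; the only slip is in the sub-case $k=1$, $q=2$, where $\cC_1(n,1)$ contains exactly one code (spanned by the all-ones vector) rather than the two you claim, but this degenerate boundary case is not addressed by the paper's proof either.
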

\begin{proof}
	By Lemma~\ref{2 main thm1}, the graph  $\Lambda_1(n,k)$
	is connected if and only if there exists no isolated vertex in it,
	i.e. $\cI_{1}(n,k)=\emptyset$.
	This is equivalent to saying that for every $C\in \cC_1(n,k)$, there exists at least a $[n,k-1]$-subcode $D$ which is non-degenerate.
	By Lemma~\ref{subcodes}, we have that  $C$ admits at least a non-degenerate subcode if and only if there is at least one
	function in $\cM$ whose kernel does not contain any column of $G_C.$
	This is always guaranteed if
	the number of surjective linear functions from $\FF_q^k $ onto  $\FF_q^{k-1}$, and hence the number of their kernels, is strictly greater than  the number of  columns of a generator matrix of $C$ (i.e. the length of $C$), hence for $|\mathcal{K}|=\frac{q^k-1}{q-1}>n.$
	On the other hand, if $n=\frac{q^k-1}{q-1}$ the code whose generator matrix has a column set consisting of all the distinct vector representatives of the
	$1$-dimensional subspaces of $\FF_q^k$ is definitely in $\cI_{1}(n,k)$
	and so $\Lambda_1(n,k)$ is disconnected.
\end{proof}

Corollary~\ref{t=1} follows from Lemma~\ref{2 main thm1} and Lemma~\ref{1 main thm1} or, alternatively, Theorem~\ref{main thm 2}.

\subsection{Projective codes}\label{projective codes}
We first prove that the Grassmann graph of the projective codes $\Delta_2(n,k)$ is always connected.
Observe that, in any case,
there is a natural bound on $n$, that is $n\leq\frac{q^k-1}{q-1}$ as the
maximum length of a projective code of dimension $k$ is the same as
the number of points of $\PG(k-1,q)$.

\begin{lemma} \label{connessione Pankov t=2}
	The graph $\Delta_2(n,k)$ is connected for any $q$ and any $2\leq k\leq n$.
      \end{lemma}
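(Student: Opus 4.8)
The plan is to exploit the dictionary between projective $[n,k]$-codes and spanning sets of $n$ distinct points of $\PG(k-1,q)$: a generator matrix $G_C=(P_1,\dots,P_n)$ of $C\in\cC_2(n,k)$ has pairwise non-proportional nonzero columns, and these span $\FF_q^k$ since $\dim C=k$; in particular $k\le n\le (q^k-1)/(q-1)$. By Theorem~\ref{equiv} the monomial group keeps us inside a single connected component, so throughout I may freely permute and rescale columns without leaving the component of $C$. The engine of the argument is one \emph{elementary move}: replacing a column $P_i$ by an admissible point $Q$ (nonzero, non-proportional to the other columns). Writing $G'$ for the new matrix and stacking, $\dim(C\cap C')=2k-\rk\binom{G}{G'}$; subtracting the top block from the bottom leaves a bottom block whose rows are all proportional to $e_i^{\top}$, so $\rk\binom{G}{G'}=k+1$ exactly when $e_i\notin C$. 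A short computation identifies $e_i\in C$ with the failure of the remaining columns to span $\FF_q^k$. Hence \textbf{if the columns other than $P_i$ already span, the move yields a vertex $C'$ adjacent to (or equal to) $C$}; this is the elementary adjacency I would use repeatedly.

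Next I would reduce to a canonical vertex. Since the $P_i$ span, I select $k$ of them forming a basis and permute these to positions $1,\dots,k$ (staying in the component); call them the \emph{frame} $b_1,\dots,b_k$ and the remaining columns the \emph{tail}. Because the frame spans, every tail column is non-essential, so each tail entry can be rewritten, by a sequence of elementary moves, to any prescribed admissible point. To replace the frame by the standard basis I would standardize $b_1,\dots,b_k$ one index at a time: before altering $b_j$ I first set some tail column equal to a point lying off the hyperplane $\langle b_1,\dots,\widehat{b_j},\dots,b_k\rangle$ (together with the standard vectors already installed), which makes $b_j$ non-essential, and then send $b_j$ to $e_j$. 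After $k$ such steps the frame is standard and again spans, so I recanonicalize the tail freely. Every projective code is thereby joined to the single fixed vertex $C_0$ with columns $\langle e_1\rangle,\dots,\langle e_k\rangle,R_1,\dots,R_{n-k}$ for a fixed admissible choice of the $R_j$, which proves connectivity.

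The hard part, and the only genuine obstacle, is ensuring that every elementary move lands on a legitimately admissible point: nonzero, non-proportional to the other $n-1$ current columns, and (in the frame steps) off a prescribed hyperplane. This is a counting matter — a hyperplane of $\PG(k-1,q)$ omits $q^{k-1}$ points while only $n-1<(q^k-1)/(q-1)$ points must be avoided — and I expect it to go through with room to spare precisely when $n<(q^k-1)/(q-1)$. The two extreme lengths I would dispose of directly: for $n=k$ the only code is $V$ itself, so $\Delta_2(k,k)$ is a single vertex, and for $n=(q^k-1)/(q-1)$ every code uses all points of $\PG(k-1,q)$, so all vertices are monomially equivalent and lie in one component by Theorem~\ref{equiv}. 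Thus the substantive work is confined to this availability bookkeeping for small $q$ and near-extremal $n$; once it is secured, the elementary-move mechanism connects everything to $C_0$.
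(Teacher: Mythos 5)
Your elementary move is sound: replacing a column $P_i$ of $G_C$ by a new admissible point $Q$ changes the row space by at most $\langle e_i\rangle$, and the identification of ``$e_i\in C$'' with ``the columns other than $P_i$ fail to span'' is correct, so the move produces a vertex equal or $\sim_\Delta$-adjacent to $C$ whenever $P_i$ is non-essential and $Q$ is nonzero and non-proportional to the remaining columns. The structural reduction (permute a frame to the front by Theorem~\ref{equiv}, rewrite the tail, then standardize the frame) is also a reasonable plan. However, the step you defer as ``availability bookkeeping'' is not a routine count and is exactly where the lemma is hardest; as written the argument has a genuine gap. Your own estimate shows it: a hyperplane of $\PG(k-1,q)$ omits $q^{k-1}$ points, while you must avoid up to $n-1$ currently used points, and $n$ may be as large as $(q^k-1)/(q-1)=q^{k-1}+q^{k-2}+\dots+1$. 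So for $q^{k-1}\le n-1$ (a large range of admissible lengths, not just the single extreme $n=(q^k-1)/(q-1)$ that you dispose of by equivalence) the naive count gives nothing: when $n=(q^k-1)/(q-1)-1$, for instance, there is exactly \emph{one} unused point of $\PG(k-1,q)$ available for any move, and nothing guarantees it lies off the hyperplane you need to escape in the frame-standardization step. Your reduction to a fixed canonical vertex $C_0$ with \emph{prescribed} tail points $R_j$ aggravates this: hitting a prescribed target requires that target not to be proportional to any current column, which forces additional shuffling you have not described. So the proof is incomplete precisely in the near-extremal regime, and the heuristic ``it goes through with room to spare when $n<(q^k-1)/(q-1)$'' is not correct as stated.

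The paper's proof avoids this obstacle entirely by never introducing new column values. Given $C$ and $C'$, it first uses Theorem~\ref{equiv} to normalize so that any column of $G_C$ proportional to a column of $G_{C'}$ sits at the same index and is actually equal to it; then it interpolates directly, letting $C_i$ be generated by the matrix whose first $i$ columns come from $G_{C'}$ and whose remaining columns come from $G_C$. The alignment guarantees every intermediate matrix has pairwise non-proportional columns, and the same rank computation you use shows $C_i=C_{i+1}$ or $C_i\sim_\Delta C_{i+1}$. If you want to salvage your route, you should either adopt this alignment trick (replace each column only by the corresponding column of the target code, never by an arbitrary admissible point), or else supply a genuine argument for the moves in the regime $q^{k-1}\le n<(q^k-1)/(q-1)$; the counting sketch you give does not cover it.
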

      For a different proof of this lemma see also~\cite[Theorem 1]{Pu}.
\begin{proof}
	Let $C$ and $C'$ be two projective codes with generator matrices respectively
	$G_C=(P_1, P_2, \dots, P_n)$
	and
	$G_{C'}=(P_1', P_2', \dots, P_n')$, where $P_i,P_i'\in\FF_q^k$ are column
	vectors.
	By Theorem~\ref{equiv}, we can assume, up to code equivalence,
	that if a column $P_i$ in $G_C$
	is proportional to a column $P_j'$ in $G_{C'}$, then $i=j$.
        Also by Theorem~\ref{equiv}, we can assume that if $P_i$ in $G_C$ is
        proportional to $P_i'$ in $G_{C'}$, then $P_i=P_i'$ (i.e. the
        coefficient of proportionality is $1$ and the columns are the same).
        If under these assumptions $G_C=G_{C'}$, then $C$ and $C'$ are
        equivalent codes and, by Theorem~\ref{equiv} they belong to the
        same connected component of $\Delta_2(n,k)$.
	Otherwise we may
        construct a path in $\Delta_2(n,k)$ joining $C$ and $C'$ by
	considering the codes
        $C_0:=C\sim_{\Delta} C_1\sim_{\Delta}\dots\sim_{\Delta} C_w=:C'$,
        where the code
	$C_i$ has as generator matrix, the matrix  where
        the first $i$ columns are the
	first $i$ columns of $G_{C'}$ and
	the remaining columns are the corresponding columns of $G_C$. Indeed, $\mathrm{rank}\begin{pmatrix}
			G_{C_i} \\ G_{C_{i+1}}
		\end{pmatrix}=k$ or $\mathrm{rank}\begin{pmatrix}
			G_{C_i} \\ G_{C_{i+1}}
		\end{pmatrix}=k+1$ so, either $C_i=C_{i+1}$ or $C_i\sim_{\Delta} C_{i+1}.$
\end{proof}

Part 1. of Corollary~\ref{t=2} is a direct consequence of Lemma~\ref{connessione Pankov t=2} and Theorem~\ref{main thm 1}. The following is part 2. of Corollary~\ref{t=2}. Part 3. of Corollary~\ref{t=2} is Theorem~\ref{main thm 2}.
\begin{lemma}
	$\Lambda_2(n,k)$ is connected if and only if $n$ is strictly less than the
	minimum size $\mu_1$ of a $1$-saturating set for $\PG(k-1,q)$.
\end{lemma}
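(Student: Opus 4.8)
The plan is to reduce the connectivity of $\Lambda_2(n,k)$ first to the absence of isolated vertices and then to a covering condition in $\PG(k-1,q)$. I would begin by applying Lemma~\ref{connessione Pankov t=2} to obtain that $\Delta_2(n,k-1)$ is connected (for $k=2$ the graph $\Delta_2(n,1)$ is empty, hence connected by convention, since $\cC_2(n,1)=\emptyset$). Theorem~\ref{main thm 1} then yields $\Lambda_2(n,k)=\widetilde{\Lambda}_2(n,k)\cup\cI_2(n,k)$ with $\widetilde{\Lambda}_2(n,k)$ connected, exactly as recorded in Part~1 of Corollary~\ref{t=2}. Consequently $\Lambda_2(n,k)$ is connected if and only if it has no isolated vertex, i.e. $\cI_2(n,k)=\emptyset$.

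Next I would invoke Theorem~\ref{main thm 2} for $t=2$ (equivalently Part~3 of Corollary~\ref{t=2}): a code $C\in\cC_2(n,k)$ is isolated precisely when the $n$ columns of a generator matrix represent a set $\Omega\subseteq\PG(k-1,q)$ of distinct points whose $2$-secants cover the whole space, $\cS_1(\Omega)=\PG(k-1,q)$. Since $C\in\cC_2(n,k)$, no column is zero and no two columns are proportional, so $|\Omega|=n$; moreover a covering set necessarily spans $\PG(k-1,q)$, so any such $\Omega$ is realised by a genuine projective $[n,k]$-code. Therefore $\cI_2(n,k)\neq\emptyset$ if and only if $\PG(k-1,q)$ admits a covering set of exactly $n$ distinct points.

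It remains to identify the smallest such $n$ with $\mu_1$. The key observation is that, by Definition~\ref{saturated}, a proper subset $\Omega\subsetneq\PG(k-1,q)$ satisfies $\cS_0(\Omega)=\Omega\neq\PG(k-1,q)$, so for proper $\Omega$ the covering condition $\cS_1(\Omega)=\PG(k-1,q)$ is exactly the condition that $\Omega$ be $1$-saturating. For the ``only if'' direction, if a covering $\Omega$ of size $n<(q^k-1)/(q-1)$ exists then it is proper, hence $1$-saturating, giving $\mu_1\le n$, while $n=(q^k-1)/(q-1)$ trivially satisfies $n\ge\mu_1$. For the ``if'' direction, from a minimum $1$-saturating set of size $\mu_1$ I would adjoin arbitrary further points to reach any $n$ with $\mu_1\le n\le(q^k-1)/(q-1)$, covering being preserved under enlargement. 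Combining the two, a covering set of size $n$ exists if and only if $n\ge\mu_1$, whence $\cI_2(n,k)=\emptyset$ if and only if $n<\mu_1$.

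The main obstacle I anticipate is the careful handling of the minimality clause in Definition~\ref{saturated} together with the range of admissible lengths: I must ensure $\mu_1$ is well defined and strictly below $(q^k-1)/(q-1)$, so that the enlargement argument stays within the bound $n\le(q^k-1)/(q-1)$ dictated by projective codes, and treat the boundary $n=(q^k-1)/(q-1)$, where the full projective code is always isolated, separately. Once the equivalence between ``proper covering set'' and ``$1$-saturating set'' is pinned down, the remaining steps are routine.
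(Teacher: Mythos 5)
Your proposal is correct and follows essentially the same route as the paper: reduce connectivity to the absence of isolated vertices via Part~1 of Corollary~\ref{t=2}, characterize isolated codes through Theorem~\ref{main thm 2} as those whose columns give a point set with $\cS_1(\Omega)=\PG(k-1,q)$, and then compare $n$ with $\mu_1$. You are in fact somewhat more careful than the paper on two points it leaves implicit --- the minimality clause $\cS_0(\Omega)\neq\PG(k-1,q)$ in Definition~\ref{saturated} and the enlargement argument producing an isolated projective code of every length $n$ with $\mu_1\le n\le(q^k-1)/(q-1)$ --- both of which are handled correctly.
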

\begin{proof}
	By Corollary~\ref{t=2} part 1., the graph  $\Lambda_2(n,k)$
	is connected if and only if there exists no isolated vertex in it.
	This is equivalent to saying that for every $C\in \cC_2(n,k)$, there exists at least a $[n,k-1]$-subcode $D$ which is projective.
	By Lemma~\ref{subcodes}, we have that  $C$ admits at least a
	projective subcode if and only if there is at least one
	function in $\cM$ whose kernel does not belong to any $2$-space
	spanned  by $2$ columns of $G_C.$
	This is always guaranteed if
	the number of  columns of any generator matrix
	of $C$ (i.e. the length of $C$) is strictly less than the
	minimum size of a $1$-saturating set of $\PG(k-1,q)$.
	
	On the other hand, if $n\geq\mu_1$, where $\mu_1$ is the
	minimum size of a $1$-saturating set $\Omega$ of $\PG(k-1,q)$, the code whose generator matrix contains as columns
	vector representatives for all of the
	$1$-dimensional subspaces of $\Omega$ is definitely in $\cI_{2}(n,k)$
	and so $\Lambda_2(n,k)$ is disconnected.
\end{proof}

\subsection{Proof of Theorem~\ref{large-q}} \label{general codes}
\begin{lemma}
	\label{l-emp}
	If $t=k$, then $\cC_{k}(n,k)=\cI_{k}(n,k)$.
	If $t<k$
	and $q>{n\choose t}^{1/(k-t)}$, then
	we have $\cC_t(n,k)\neq\emptyset$
	and $\cI_t(n,k)=\emptyset$.
\end{lemma}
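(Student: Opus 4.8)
The plan is to handle the two regimes $t=k$ and $t<k$ separately, since the first is a genuinely degenerate case while the second reduces to two independent counting arguments. Throughout I will use the equivalent form $q^{k-t}>\binom{n}{t}$ of the hypothesis, and I note that $\cI_t(n,k)\subseteq\cC_t(n,k)$ always holds by Definition~\ref{isolated code}.

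For $t=k$ the whole point is that $\cC_k(n,k-1)=\emptyset$. Indeed, a code in $\cC_k(n,k-1)$ would be an $[n,k-1]$-code every $k$ of whose generator-matrix columns are linearly independent; but those columns lie in $\FF_q^{k-1}$, where any $k$ vectors are necessarily dependent, a contradiction. Hence no $C\in\cC_k(n,k)$ can contain a subcode lying in $\cC_k(n,k-1)$, so every code of $\cC_k(n,k)$ is vacuously isolated and $\cC_k(n,k)=\cI_k(n,k)$.

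For $t<k$ I would first obtain $\cC_t(n,k)\neq\emptyset$ by a greedy column-by-column construction in $\FF_q^k$, maintaining the invariant that any $t$ selected columns are linearly independent. When $m-1$ columns are in place, the next column must avoid every subspace spanned by $t-1$ of them; there are at most $\binom{m-1}{t-1}$ such $(t-2)$-flats, each with $(q^{t-1}-1)/(q-1)$ points of $\PG(k-1,q)$. A valid choice survives as long as this forbidden total stays below $|\PG(k-1,q)|=(q^k-1)/(q-1)$, the binding case being $m=n$, i.e. $\binom{n-1}{t-1}(q^{t-1}-1)<q^k-1$. The hypothesis gives $\binom{n-1}{t-1}(q^{t-1}-1)<\binom{n}{t}q^{t-1}<q^{k-t}q^{t-1}=q^{k-1}\le q^k-1$, so the construction reaches length $n$. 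Conversely, to get $\cI_t(n,k)=\emptyset$ I would use Theorem~\ref{main thm 2}: a code $C\in\cC_t(n,k)$ is isolated exactly when its set of columns $\Omega$ is $(t-1)$-saturating, i.e. when $\cS_{t-1}(\Omega)=\PG(k-1,q)$. Since any $t$ columns are independent, each of the at most $\binom{n}{t}$ spans $\langle X\rangle$ with $|X|=t$ is a genuine $(t-1)$-flat of $(q^t-1)/(q-1)$ points, so $|\cS_{t-1}(\Omega)|\le\binom{n}{t}(q^t-1)/(q-1)$; and $q^{k-t}>\binom{n}{t}$ yields $\binom{n}{t}(q^t-1)<q^k-1$, whence $\cS_{t-1}(\Omega)$ cannot exhaust $\PG(k-1,q)$. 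Thus no code in $\cC_t(n,k)$ is isolated.

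The individual steps are elementary once the set-up is fixed; the only delicate part is the arithmetic that converts the single hypothesis $q^{k-t}>\binom{n}{t}$ into the two covering inequalities (together with the harmless bound $q^{k-1}\le q^k-1$). I expect the greedy existence argument to demand the most care, specifically verifying that the forbidden set stays strictly below $|\PG(k-1,q)|$ at \emph{every} step rather than only at $m=n$, which follows because $\binom{m-1}{t-1}$ is increasing in $m$ so the worst case is the last column added.
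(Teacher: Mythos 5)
Your treatment of the $t=k$ case and of $\cI_t(n,k)=\emptyset$ is in substance the paper's own argument: the paper also observes that an $[n,k-1]$-subcode of an MDS code cannot have $k$ independent columns in $\FF_q^{k-1}$, and for the second part it performs exactly your double count, only phrased in terms of the kernels of the surjections $\varphi:\FF_q^k\to\FF_q^{k-1}$ (at most $\binom{n}{t}$ spans of $t$ columns, each containing at most $(q^t-1)/(q-1)$ of the $(q^k-1)/(q-1)$ candidate kernels) rather than through the $(t-1)$-saturating-set language of Theorem~\ref{main thm 2}; since that theorem is proved earlier, your rephrasing is legitimate and not circular, and your arithmetic $\binom{n}{t}(q^t-1)<q^{k-t}(q^t-1)=q^k-q^{k-t}\leq q^k-1$ is correct. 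Your greedy construction for $\cC_t(n,k)\neq\emptyset$ is genuinely additional material --- the paper's proof simply takes $C\in\cC_t(n,k)$ and never actually establishes non-emptiness --- and the step-by-step count is right, including the observation that the binding case is $m=n$.

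There is, however, one genuine (though easily repaired) gap in that greedy argument: ensuring that any $t$ of the $n$ chosen columns are linearly independent does not by itself guarantee that the columns span $\FF_q^k$, so the $k\times n$ matrix you build need not have rank $k$ and hence need not generate a $k$-dimensional code. (For $t=1$ the greedy process could, for instance, pick the same nonzero vector $n$ times.) The fix is to seed the process with the $k$ standard basis vectors of $\FF_q^k$ --- every subset of which is independent, so the invariant holds at the start --- and then run your greedy step only for the remaining $n-k$ columns; this is permissible since $n>k$, and the same counting bound applies at each subsequent step. With that amendment the proposal is complete.
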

\begin{proof}
	If $k=t$, then  each set of $k$ columns of any generator matrix of a code
	$C\in\cC_{k}(n,k)$ is linearly independent, so $C$ is MDS.
	In particular, the dual minimum distance of each $[n,k-1]$-subcode of an MDS
	code is at most $k-1<k$ (as its generator matrix has just $k-1$ rows).
	So, every code $C\in\cC_{k}(n,k)$ is isolated and $\cC_{k}(n,k)=\cI_{k}(n,k)$.
	
	Suppose now $t\leq k-1$ and $q>n$.
	Take $C\in\cC_t(n,k)$ with generator matrix $G_C$. Let $\varphi:\FF_q^k\to\FF_q^{k-1}$ be
	a surjective linear map and denote by $D=\varphi(C)$
	the subcode of $C$ having as  generator matrix $G_C^{\varphi}.$
	We have $D\not\in\cC_t(n,k-1)$ if and only if there is
	a set $\Omega$ of $t$ linearly independent
	columns in $G_C$  which is mapped into
	a set of linearly dependent columns in $G_C^{\varphi}$. In other words, this
	happens if and only if $\ker(\varphi)\in\langle\Omega\rangle$.
	As $\varphi$ varies in all possible ways, there are exactly $(q^k-1)/(q-1)$
	possible subspaces for $\ker(\varphi).$ On the other hand the  number of
	possible $t$-subspaces spanned by $t$ columns of $G_C$ is
	${n\choose {t}}$ and each of these contains at most
	$(q^{t}-1)/(q-1)$ distinct $1$-dimensional subspaces.
	So, if
	\[ (q^k-1)>{n\choose t}(q^{t}-1),
        \]
        then
	we have that there is at least one $\varphi\in \cM$ such that the subcode of $C$ determined by $\varphi$ is in
	$\cC_{t}(n,k-1)$ and, consequently, $C\not\in\cI_t(n,k)$.
	So, if
	\[ \frac{q^k-1}{q^t-1}>{n\choose t}, \]
	then $\cI_{t}(n,k)=\emptyset$.
	
	For $t<k$ we have the approximation
	$q^{k-t}<\frac{q^k-1}{q^t-1}$,
	since
	\[ q^{k-t}=\frac{q^k}{q^t}<\frac{q^k-1}{q^t-1}
		\Leftrightarrow
		q^{k}(q^t-1)<q^t(q^k-1)
		\Leftrightarrow
		q^k>q^t \Leftrightarrow k>t. \]
	So, if we require $q^{k-t}> {n\choose t}$, then $\frac{q^{k}-1}{q^{t}-1} > {n\choose t}$. Hence $\cI_t(n,k)=\emptyset.$
\end{proof}
Theorem~\ref{large-q} now follows from Theorem~\ref{told}, Theorem~\ref{main thm 1} and
Lemma~\ref{l-emp}.

\subsection{Transparent embeddings}\label{transparenza}
Given two point-line geometries $\cG_1$ and $\cG_2$, we say that an injective map
$\varepsilon: \cG_1\to \cG_2$ of the point-set of $\cG_1$ into the point-set
of $\cG_2$
is an \emph{embedding} of $\cG_1$
into $\cG_2$ if
for any line $\ell$ of $\cG_1$ the image $\varepsilon(\ell):=\{
	\varepsilon(p)\colon p\in\ell\}$ is a line of $\cG_2$.
If $\cG_2$ is a projective geometry we say that an embedding of $\cG_1$ into $\cG_2$ is a projective embedding into the projective subspace $\langle\varepsilon(\cG_1)\rangle$.

In~\cite{transp} the notion of \emph{transparency} for an
embedding has been introduced. An embedding $\varepsilon:\cG_1\to \cG_2$ is transparent if the preimage of any line of $\cG_2$
contained in the image of $\varepsilon$
is a line of $\cG_1$.

Let us focus now on the Grassmann geometry $\cG(n,k)$ and the geometry $\cP_t(n,k).$ Since $\cP_t(n,k)$ is a subgeometry of $\cG(n,k),$ the inclusion map $\iota:\cP_t(n,k)\to\cG(n,k)$ is an embedding.

There is a vast literature regarding projective embeddings of the Grassmann geometry. For the sake of our paper we stick only to the essential results  referring the interested reader to, e.g.,~\cite{HIG}.
\bigskip

\noindent {\bf Proof of Theorem~\ref{t0}}.
By definition of $\cP_t(n,k)$, any line of $\cP_t(n,k)$ is also
a line of $\cG(n,k).$ In particular, the inclusion $\iota$
is an embedding $\iota:\cP_t(n,k)\to\cG(n,k)$.

To prove that $\iota$ is transparent we need to show that if
a line $\ell$ of $\cG(n,k)$ consists all of points of $\cC_t(n,k)$, then
for any two $X,Y\in\ell$ we have $X\sim_\Lambda Y$. This is equivalent
to saying that if
$X$ and $Y$ are not on a line of $\Lambda_t(n,k),$ then $\iota(X)$ and $\iota(Y)$ are not on a line contained in $\iota(\cC_t(n,k))$, i.e.
either $X$ and $Y$ are not on a line of $\cG(n,k)$ or
$X$ and $Y$ are on a line of $\cG(n,k)$ whose points are not all images of points in $\cC_t(n,k)$.
If $\iota(X)$ and $\iota(Y)$ are not collinear in $\cG(n,k)$, then
$X$ and $Y$ are not collinear in $\cP_t(n,k)$ as all lines of
$\cP_t(n,k)$ are lines of $\cG(n,k)$. So, we have to deal only with
the latter case. Observe that $X$ and $Y$ lie on a line of
$\cG(n,k)$ if and only if $X\sim_{\Delta}Y$.

So, suppose $X\sim_{\Delta}Y$ but $X\not\sim_{\Lambda}Y$ for
$X,Y\in\cC_t(n,k)$. To obtain the result
it is enough to show that there exists at least one $Z$
on the line $\ell$
of $\cG(n,k)$ determined by $X$ and $Y$ with $Z\not\in\cC_t(n,k)$.
So, the theorem is a consequence of the following lemma.
\begin{lemma}
	Suppose $X,Y\in\cC_t(n,k)$, $\dim(X\cap Y)=k-1$ but
	$X\cap Y\not\in\cC_t(n,k-1)$. Then there exists $Z$ such that
	$X\cap Y\subset Z \subset \langle X,Y\rangle$, $\dim(Z)=k$ and $Z\not\in\cC_t(n,k)$.
\end{lemma}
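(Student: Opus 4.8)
The plan is to set up coordinates via generator matrices and reduce the problem to a counting argument about how many of the intermediate codes $Z$ on the line $\ell$ can fail to lie in $\cC_t(n,k)$. First I would fix a generator matrix $G_X=(P_1,\dots,P_n)$ for $X$ with columns in $\FF_q^k$, and exploit the hypothesis $\dim(X\cap Y)=k-1$. Choosing a surjective linear map $\varphi\colon\FF_q^k\to\FF_q^{k-1}$ as in Lemma~\ref{subcodes}, so that $G^{\varphi}_X$ is a generator matrix for $D:=X\cap Y$, the codes $Z$ with $D\subset Z\subset\langle X,Y\rangle$ and $\dim(Z)=k$ are exactly the points of the line $\ell_{D,\langle X,Y\rangle}$ of $\cG(n,k)$. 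Each such $Z$ corresponds to a choice of how $D$ is extended by one dimension inside the $(k+1)$-dimensional space $\langle X,Y\rangle$; concretely these are parametrised by the points of a projective line $\PG(1,q)$ sitting in the quotient $\langle X,Y\rangle/D$, giving exactly $q+1$ codes $Z$ on $\ell$, among which $X$ and $Y$ are two.

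Next I would translate the condition $Z\not\in\cC_t(n,k)$ into the existence of a dependent set of $t$ columns. The key point is the hypothesis $D=X\cap Y\not\in\cC_t(n,k-1)$: this means that in the generator matrix $G^{\varphi}_X$ of $D$ there is a set of $t$ columns, say indexed by $S\subseteq\{1,\dots,n\}$ with $|S|=t$, that is linearly dependent, i.e. some nontrivial combination $\sum_{i\in S}\lambda_i P_i\in\ker(\varphi)$. I would then examine, for each of the $q+1$ codes $Z$ on $\ell$, whether that very same column-set $S$ remains independent. Since $X,Y\in\cC_t(n,k)$, the set $S$ is independent in $G_X$ and in $G_Y$, so the failure of independence for $D$ is forced purely by the one-dimensional collapse $\varphi$. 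The plan is to show that this collapse persists for at least one intermediate $Z$: among the $q+1$ one-dimensional extensions of $D$ inside $\langle X,Y\rangle$, the condition that $S$ becomes dependent is a single linear condition on the extension direction, cutting out at least one point of $\PG(1,q)$ distinct from both $X$ and $Y$.

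The main obstacle I anticipate is ruling out the degenerate possibility that the only extensions making $S$ dependent are precisely $X$ and $Y$ themselves, which would leave no admissible $Z$. To handle this I would argue at the level of the $t$ columns $(P_i)_{i\in S}$ viewed inside $\langle X,Y\rangle$ and their images in $\langle X,Y\rangle/Z$ for varying $Z$; the dependence of $S$ modulo $Z$ is governed by whether the rank of the relevant $(t+1)$-column submatrix (the $t$ columns together with the extension vector) drops. Since $X$ and $Y$ already keep $S$ independent, the locus of bad extensions is a proper condition, and a dimension count on the pencil of extensions shows it is nonempty and not exhausted by $\{X,Y\}$ provided $q\geq 2$, which always holds. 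One must be slightly careful because several different dependent sets $S$ may arise from $D\not\in\cC_t(n,k-1)$, but it suffices to fix one such $S$ and produce a single witness $Z$.

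Once a $Z\in\ell$ with the $S$-columns dependent is exhibited, that $Z$ has dual minimum distance at most $t$, hence $Z\not\in\cC_t(n,k)$, completing the lemma and, with the discussion preceding it, establishing the transparency of $\iota$ in Theorem~\ref{t0}. I would present the argument by first reducing to the pencil $\{Z\}=\ell$, then analysing the single linear condition imposed by the fixed dependent set $S$, and finally verifying the witness is genuinely distinct from $X$ and $Y$.
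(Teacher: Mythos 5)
Your proposal is correct and takes essentially the same approach as the paper: fix one linearly dependent $t$-set of columns of $D=X\cap Y$, parametrise the intermediate codes $Z$ by the pencil $\alpha\mathbf{x}+\beta\mathbf{y}$, and note that the dependence of those columns persists for exactly one point of the pencil, which cannot be $X$ or $Y$ since both lie in $\cC_t(n,k)$. The only difference is presentational: the paper solves explicitly for the witness $Z_{-\mathbf{x}+\lambda\mathbf{y}}$ by a rank computation, whereas you phrase the same fact as the nontrivial kernel of a linear functional on $\PG(1,q)$.
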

\begin{proof}
	Put $D=X\cap Y$. By hypothesis, $D\not\in\cC_t(n,k-1)$. Suppose
	by contradiction that for all $\mathbf{v}\in\langle X, Y\rangle\setminus D$ we have
	$Z_{\mathbf v}:=\langle D,\mathbf{v}\rangle\in\cC_t(n,k)$.
	Write $X=\langle D, \mathbf{x}\rangle $ and $Y=\langle D, \mathbf{y}\rangle $, where $\mathbf{x}=(x_1,\dots,x_n)\in X\setminus D$ and
	$\mathbf{y}=(y_1,\dots,y_n)\in Y\setminus D$. So,
	we can always
	take $\mathbf{v}=\alpha\mathbf{x}+\beta\mathbf{y}$, for $\alpha$, $\beta\in \FF_q.$

	Since $D\not\in\cC_t(n,k-1)$, there is at least one set of $t$ columns $P_{i_1},\dots,P_{i_t}$ with $1\leq i_1<\dots<i_t\leq n$
	in any generator matrix $G_D$ of $D$ which are linearly dependent.
	Denote by $\overline{G_{D}}$	the $(k-1)\times t$ submatrix of $G_D$ having $P_{i_1},\dots,P_{i_t}$ as columns (and $R_1,\dots, R_{k-1}$ as rows). By construction, we
	have that $\mathrm{rank}(\overline{G_{D}})\leq t-1.$
	
	Consider the $(k\times t)$-matrix
	\begin{equation}\label{sottomatrice}
		G_{\alpha,\beta}:=\begin{pmatrix}
			P_{i_1}                      & P_{i_2}                      & \dots & P_{i_t}                      \\
			\alpha x_{i_1}+\beta y_{i_1} & \alpha x_{i_2}+\beta y_{i_2} & \dots & \alpha x_{i_t}+\beta y_{i_t}
		\end{pmatrix}.
	\end{equation}
	
	The matrix $G_{\alpha,\beta}$ can be regarded as the submatrix of a generator matrix of the generic code $Z_{\alpha\mathbf{x}+\beta\mathbf{y}}$ with $D\subset Z_{\alpha\mathbf{x}+\beta\mathbf{y}}\subset\langle X,Y\rangle$.
	If  $Z_{\alpha\mathbf{x}+\beta\mathbf{y}}\in\cC_t(n,k)$, then
	necessarily $\mathrm{rank}(G_{\alpha,\beta})=t$.
	
	On the other hand, since $\begin{pmatrix}
			P_{i_1} & P_{i_2} & \dots & P_{i_t} \\
			x_{i_1} & x_{i_2} & \dots & x_{i_t}
		\end{pmatrix}$
	is a $(k\times t)$-submatrix of a generator matrix of the code $X$ and since $X\in \cC_t(n,k)$, we have that $\mathrm{rank}\begin{pmatrix}
			P_{i_1} & P_{i_2} & \dots & P_{i_t} \\
			x_{i_1} & x_{i_2} & \dots & x_{i_t}
		\end{pmatrix}=t.
	$
	
	So, \[\mathrm{rank}\begin{pmatrix}
			P_{i_1} & P_{i_2} & \dots & P_{i_t} \\
			x_{i_1} & x_{i_2} & \dots & x_{i_t} \\
			y_{i_1} & y_{i_2} & \dots & y_{i_t}
		\end{pmatrix}=t=
		\mathrm{rank}\begin{pmatrix}
			P_{i_1} & P_{i_2} & \dots & P_{i_t} \\
			x_{i_1} & x_{i_2} & \dots & x_{i_t}
		\end{pmatrix}.
	\]
	Thus,
	the vector $(y_{i_1},\dots,y_{i_t})$ is a linear combination
	of the rows $R_1,\dots R_{k-1}$ of $\overline{G_{D}}$ and of the vector
	$(x_{i_1},\dots,x_{i_t})$, where $(x_{i_1},\dots,x_{i_t})$
	appears with a non-zero coefficient (otherwise
	$(y_{i_1},\dots,y_{i_t})\in \langle R_1,\dots,R_{k-1}\rangle$ and
	consequently the columns $\begin{pmatrix}
			P_{i_1} \\
			y_{i_1}
		\end{pmatrix}, \begin{pmatrix}
			P_{i_2} \\
			y_{i_2}
		\end{pmatrix}, \dots, \begin{pmatrix}
			P_{i_t} \\
			y_{i_t}
		\end{pmatrix}$ of the generator
	matrix $\begin{pmatrix}
			G_D \\
			\mathbf{y}
		\end{pmatrix}$ of $Y$ would be linearly dependent and thus
	$Y\not\in\cC_t(n,k)$, against the hypothesis).
	Hence, we can write
	$\lambda (y_{i_1},\dots,y_{i_t})=(x_{i_1},\dots,x_{i_t})+\sum_{j=1}^{k-1}\theta_j R_j$ for suitable $\theta_j, \lambda\in \FF_q $ and
	$\lambda\neq 0.$
	So,
	\[ \lambda (y_{i_1},\dots,y_{i_t})-(x_{i_1},\dots,x_{i_t})=
		\sum_{j=1}^{k-1}\theta_j R_j, \]
	and the rank of the matrix $G_{-1,\lambda}$ (obtained from~(\ref{sottomatrice}) with $\alpha=-1$ and $\beta=\lambda$) is at most $t-1$.
	It follows that $Z_{-\mathbf{x}+\lambda\mathbf{y}}\not\in\cC_t(n,k)$.
\end{proof}

\section*{Acknowledgments}
Both authors are affiliated with GNSAGA of INdAM (Italy) whose support they kindly acknowledge.

\vskip.2cm\noindent
\begin{minipage}[t]{\textwidth}
	Authors' addresses:
	\vskip.4cm\noindent\nobreak
	\begin{minipage}[t]{7cm}
		\small{Ilaria Cardinali\\
			Dep. Information Engineering and \\ Mathematics \\University of Siena\\
			Via Roma 56, I-53100 Siena, Italy\\
			ilaria.cardinali@unisi.it}
	\end{minipage}
	\qquad\qquad
	\begin{minipage}[t]{7cm}
		\small{Luca Giuzzi\\
			D.I.C.A.T.A.M. \\
			University of Brescia\\
			Via Branze 43, I-25123 Brescia, Italy \\
			luca.giuzzi@unibs.it}
	\end{minipage}
\end{minipage}


\begin{thebibliography}{999}
\bibitem{BM82} J.A. Bondy and U.S.R. Murty, \emph{Graph theory and
    applications}, North-Holland, New York (1982).
	\bibitem{HIG} F. Buekenhout, Handbook of Incidence Geometry -- Buildings
	and Foundations, Elsevier, Amsterdam (1995).
	\bibitem{transp} I. Cardinali, L. Giuzzi, A.Pasini \emph{On transparent embeddings of point-line geometries}. J. Combin. Theory Ser. A 155 (2018), 190–224.
	\bibitem{ILK21} I. Cardinali, L. Giuzzi, M. Kwiatowski, \emph{On the Grassmann
		graph of linear codes}, Finite Fields Appl. {\bf 75} (2021), 101895.
	
	\bibitem{Dav03}
	A. A. Davydov, S. Marcugini, and
	F. Pambianco, \emph{On saturating sets in projective spaces}, J. Combin. Th. A 103 (2003) 1–15.
	\bibitem{Dav19}
	A. A. Davydov, S. Marcugini, and
	F. Pambianco, \emph{New covering codes of radius $R$, codimension $tR$ and
		$tR+\frac{R}{2}$ and saturating sets in projective spaces},
	Des. Codes Cryptogr. {\bf 87} (2019), 2771--2792.
	\bibitem{G13}
	M. Giulietti,
	\emph{The geometry of covering codes: small complete caps and saturating sets in Galois spaces} in S.R. Blackburn, R. Holloway, M. Wildon,
	Surveys in Combinatorics 2013, London Math. Soc. Lec. Note Series,
	409, 51--90, Cambridge University Press, Cambridge (2013).
      \bibitem{GW85} R.L. Graham and P. M. Winkler,
        \emph{On Isometric Embeddings of Graphs},
        Trans. Amer. Math. Soc. {\bf 288}:2 (1985), 527--536.
	
	\bibitem{KP16}
	M. Kwiatkowski, M. Pankov, \emph{On the distance between linear
          codes}, Finite Fields Appl. {\bf 39} (2016), 251--263.
      \bibitem{KP17}
        M. Kwiatowski, M. Pankov, \emph{Chow's theorem for linear codes},
        Finite Fields Appl. {\bf 46} (2017), 147--162.
	\bibitem{KPP18} M. Kwiatkowski, M. Pankov, A. Pasini, \emph{The graphs of
		projective codes}, Finite Fields Appl. {\bf 54} (2018),
	15--29.
	\bibitem{MS} F.J. MacWilliams, N.J.A. Sloane,
	\emph{The Theory of Error-Correcting Codes},
	North-Holland Mathematical Library vol. 16, North-Holland
	Publishing Co., Amsterdam (1977).
	\bibitem{P23}
	M. Pankov, \emph{The graphs of non-degenerate linear codes},
	J. Combin Th. A {\bf 195} (2023) 105720.
      \bibitem{Pu}
        M. Pankov,
        \emph{Connectedness of projective codes in the Grassmann graph},
        preprint, arXiv:1806.02729v2.
	\bibitem{S11}
	E.E. Shult, \emph{Points and lines. Characterizing the classical geometries}, Universitext, Springer, Heidelberg (2011).
	
\end{thebibliography}
\end{document}